\newif\ifarxiv\arxivfalse
\newif\ifaccel\accelfalse
\newif\ifcolors\colorsfalse
\def\captionsetup#1{} 
    \newcounter{ALC@unique}
\newcommandx\sw[4][{4=k}]{S_{#1}^{#2}\seq{#3^t}[t\leq{#4}]}
\def\mydot{\hspace*{0.25pt}\raisebox{0.6pt}{\scalebox{0.8}{\text{\tiny\textbullet}}}\hspace*{0.25pt}}
\newcommand{\cf}{\textit{cf. }}
\DeclareMathOperator\blkdiag{blkdiag}
\DeclareMathOperator\ri{ri}
\newcommand{\vertiii}[1]{{\left\vert\kern-0.25ex\left\vert\kern-0.25ex\left\vert #1 
    \right\vert\kern-0.25ex\right\vert\kern-0.25ex\right\vert}}
\newcommand{\TheShortTitle}{Multi-agent structured optimization with bounded communication delays}
\newcommand{\TheTitle}{Primal-dual algorithms for multi-agent structured optimization over message-passing architectures with bounded communication delays}
\newcommand{\TheShortAuthor}{P. Latafat and P. Patrinos}
\newcommand{\TheFunding}{%
	This work was supported by the Research Foundation Flanders (FWO) PhD grant 1196820N and research projects G0A0920N, G086518N and G086318N;
    Research Council KU Leuven C1 project No. C14/18/068;
    Fonds de la Recherche Scientifique -- FNRS and the Fonds Wetenschappelijk Onderzoek -- Vlaanderen under EOS project no 30468160 (SeLMA)
} 
\newcommand{\TheKeywords}{%
    Asynchronous algorithms, %
	Primal-dual algorithms, %
    Distributed optimization, % 
	Message passing,
} 
\newcommand{\TheAbstract}{% 
	We consider algorithms for solving structured  convex optimization problems over a network of agents with communication delays. 
    It is assumed that each agent performs its local updates by using possibly outdated information from its neighbors under the assumption that the delay with respect to each neighbor is bounded but otherwise arbitrary. The private objective of each agent is represented by the sum of two possibly nonsmooth functions, one of which is composed with a linear mapping. The global optimization problem is the aggregate of the local cost functions and a common Lipschitz-differentiable term. When the coupling between the agents is represented  only through the common function the primal-dual algorithm proposed by V\~u and Condat can be conveniently employed, while for more general structures a new algorithm is proposed. Moreover, a randomized variant is presented that allows the agents to wake up at random and independently from one another. The convergence of each of the proposed algorithms is established under different strong convexity assumptions.%
}
	\title[\TheShortTitle]{\Large\scshape\bfseries\TheTitle}
	\author[\TheShortAuthor]{%
		Puya Latafat and 
		Panagiotis Patrinos%
	}
	\thanks{%
		\TheAddressKU.
		{\tt
			\{%
				\href{mailto:puya.latafat@kuleuven.be}{puya.latafat},%
				\href{mailto:panos.patrinos@esat.kuleuven.be}{panos.patrinos}%
			\}%
			\href{mailto:puya.latafat@kuleuven.be,panos.patrinos@esat.kuleuven.be}{@esat.kuleuven.be}%
		}%
	\\
		\TheFunding
	}
	\keywords{\TheKeywords}
\begin{document}

	\begin{abstract}
		\TheAbstract
	\end{abstract}
 
	\maketitle

\else

	\begin{document}
    \title{\TheTitle}

    \author{
    \name{Puya Latafat\thanks{Corresponding author: Puya Latafat Email: puya.latafat@kuleuven.be 
    } %and Panagiotis Patrinos \thanks{Panagiotis Patrinos. Email: Panos.Patrinos@esat.kuleuven.be} 
    \thanks{\TheFunding}
      \thanks{A preliminary version of this work was presented at the IEEE Conference on Decision and Control, FL, USA, 2018 \cite{Latafat2018Multiagent}.}}
    \affil{\TheAddressKU}
    }
	\maketitle

	\begin{abstract}
		\TheAbstract
	\end{abstract}

    \begin{keywords}
    \TheKeywords
    \end{keywords}

\fi

% =============================================================================

	\section{Introduction}
\label{sec:intro:RandAsyn}

%!TEX root = ../../../main.tex
In this paper we consider a class of structured optimization problems that can be represented as follows:
 \begin{equation} \label{eq:main-prob_intro:RandAsyn}
  \underset{x\in\R^n}{\minimize}\ {f}({x})+\sum_{i=1}^m\big({g_i}({x_i})+h_i(N_{i}x)\big),
  \end{equation}
  where $x=(x_1,\ldots,x_m)$, $N_i$ is a linear mapping, $h_i$, $g_i$ are proper closed convex (possibly) nonsmooth functions, $g_i$ are in addition strongly convex, and $f$ is convex, continuously differentiable with Lipschitz continuous gradient.   The goal is to solve \eqref{eq:main-prob_intro:RandAsyn} over a network of agents through local communications. Each agent is assumed to maintain its own private cost functions $g_i$ and $h_i$, while $f$ and (possibly) the linear mappings $N_i$ represent the coupling between the agents.
  % Communication delays between agents constitutes an important challenge in such a network. 
  In practice local communications between agents are subject to delays and/or dropouts which constitutes an important challenge addressed here. 
  % Communication delays between agents is an important challenge in such a network which is addressed in this work.\
  % An important challenge in convergence analysis of such a network is 
  % for the agents to perform their local computations with possibly outdated information.  
   % that the gents may only have access to outdated information required for their computations. 
  % In many practical applications the agents do not have access to the latest information required for their computations. This is the main challenge that is addressed in this work. 
  % It is usually assumed that the agents have access to the latest information required for their computations. 
   % An important challenge in such a network is the assumption that the agents have access to the latest information required for their computations.  

Most iterative algorithms for convex optimization can be written as 
\begin{equation}\label{eq:contra:RandAsyn}
  z^{k+1} = z^k - Tz^k,
\end{equation}
where the mapping $\id -T$ ($\id$ is the identity operator) has some contractive property resulting in the convergence of the sequence to a zero of $T$. In distributed optimization the goal is to devise algorithms where a group of agents/processors distributively update certain coordinates of $z$ while guaranteeing convergence to a zero of $T$. 

  There are two main computational models in distributed optimization (depicted in \cref{fig1:RandAsyn}) with a range of hybrid models in between \cite[\S 1]{Bertsekas1989Parallel}. These models are conceptually different and require different analysis. The model considered here is the local/private-memory model. Let us first describe the two models. 
   % As a result, the notion of asynchrony in the literature is overloaded and may refer to conceptually different schemes with different analysis depending on the model. The main two memory models are depicted in \cref{fig1:RandAsyn}. The model considered in this work is the local/private-memory model. Let us first describe the two models that will serve to clarify the model considered here. 
% \setlength\belowcaptionskip{50pt}
 \begin{figure}
% \centering
	\includegraphics[clip, trim= 2cm 1cm 0cm 5.7cm,width=\textwidth]{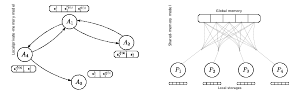}
	\caption{The two main memory models; (left) agents cooperating to perform a task, (right) processors updating a global memory
  }
  \label{fig1:RandAsyn}
\end{figure}

{\bf Shared-memory model:} 
This model is characterized by the access of all agents/processors to a shared memory. A large body of literature exists for parallel coordinate descent algorithms for this problem. Typically, coordinate descent algorithms would require a memory lock to ensure consistent reading.  Interesting recent works allow inconsistent reads \cite{Liu2015Asynchronous,Peng2016ARock}. 
% A coordinate descent variant of \eqref{eq:contra:RandAsyn} would be implemented as follows: a processor reads the  global memory into its local storage, and proceeds to choose a random  coordinate $i$ from $\{1,\ldots,m\}$ and to perform 
% In this model, for the fixed point iteration  \eqref{eq:contra:RandAsyn}
In this model, for the fixed point iteration  \eqref{eq:contra:RandAsyn}, each processor reads the  global memory %into its local storage, 
and proceeds to choose a random  coordinate $i\in\{1,\ldots,m\}$ and to perform 
  % In this model, for the fixed point iteration  \eqref{eq:contra:RandAsyn} a processor reads the  global memory into its local storage, and proceeds to choose a random  coordinate $i$ from $\{1,\ldots,m\}$ and to perform 
\begin{equation*}
  z_i^{k+1} = z_i^k - T_i\hat{z}^k,
\end{equation*}
where $\hat{z}^k$ denotes the data loaded from the global memory to the local storage at the clock tick $k$, and $T_i$ represents the operator that updates the $i$-th coordinate. 
This form of updates are asynchronous in the sense that the processors update the global memory simultaneously resulting in possibly inconsistent  local copy $\hat{z}^k$ due to other processors modifying the  global memory during a read. 
% The analysis of such methods would in general rely on either using the properties of $T_i$ (for example using coordinate-wise Lipschitz continuity for the proximal gradient method, \cite{Liu2015Asynchronous}), or the properties of the global operator (see \cite{Zhimin2016ARock} for nonexpansive operators). 
The analysis of such algorithms would in general rely on either using the properties of the operator that updates the $i$-th coordinate when possible (coordinate-wise Lipschitz continuity in the case of the gradient \cite{Liu2015Asynchronous}), or the properties of the global operator (see \cite{Peng2016ARock} for nonexpansive operators). 
 % The analysis of such algorithms would in general  rely on either using the properties of $T_i$ when possible (see \cite{Liu2015Asynchronous} for the proximal gradient method), or the properties of the global operator (see \cite{Zhimin2016ARock} for nonexpansive operators). 
A crucial point in the convergence analysis of such methods is the fact that for a given processor, the index of the coordinate to be updated is selected at random, but no matter which coordinate is selected the same local data $\hat{z}^k$ is used for the update. 
 Let $\hat{T}_iz \coloneqq (0,\ldots,0,T_iz,0\ldots,0)$. Then, in a randomized scheme $\hat{T}_i$ can be summed over $i$:    
\begin{equation*}
  \sum_{i=1}^m \hat{T}_i\hat{z}^k = T\hat{z}^k,
\end{equation*}
allowing one to use the properties known for the global operator (see the proof of \cite[Lem. 7]{Peng2016ARock}). %This type of argument is also used in \cite{Wu2018Decentralized} in the context of consensus optimization over a multi-processor model. 
% allowing us to use the properties known for the global operator (see the proof of \cite[Lem. 2]{Zhimin2016ARock}). 
% Alternatively, one may use properties of $T_i$ whenever possible   
% In the convergence analysis of such method 
% The convergence analysis for such a scheme involves showing stochastic Fej\'er monotonicity of the generated sequence for each processor (see \cite{Zhimin2016ARock}), following the ideas used in \cite{Bianchi2016Coordinate,Combettes2015Stochastic}. 
% A crucial step in the convergence analysis is the fact that for a given processor, the index of the coordinate to be updated is selected at random, but no matter which coordinate is selected the same stored local data $\hat{x}^k$ is used for the update. Let $\hat{T}_ix \coloneqq (0,\ldots,0,T_ix,0\ldots,0)$. Then in a random scheme the operators $\hat{T}_i$ can be summed over $i$:    
% \begin{equation*}
% 	\sum_{i=1}^m \hat{T}_i\hat{x}^k = T\hat{x}^k,
% \end{equation*}
% allowing us to use the properties known for the global operator (see the proof of \cite[Lem. 2]{Zhimin2016ARock} and \cite[Sec. 4]{Liu2015Asynchronous}). 
%Consequently, where $T$ is the global operator for which some contractive property is assumed. 
This type of argument is also used in \cite{Wu2018Decentralized} in the context of decentralized consensus optimization. See \cite{Cannelli2019Asynchronous} for a detailed discussion on the assumptions that are often imposed in this model. 
As we discuss below, the difficulty in the local-memory model is precisely due to the fact that this summation no longer holds.

{\bf Local/private-memory model:}
 In this model each agent/processor has its own private local memory. The agents can send and receive information to other agents as needed, and agent $i$ can only update $z_i$. 
This model is also referred to as \emph{message-passing model} \cite{Bertsekas1989Parallel}.  

In the absence of delay between agents, randomized block-coordinate updates may be used to develop distributed asynchronous algorithms. Such schemes would typically involve random independent activation of agents to perform their local updates, and are in this sense also referred to as asynchronous \cite{Iutzeler2013Asynchronous,Bianchi2016Coordinate,Latafat2019Randomized,Pesquet2015class}. Note that in these schemes while the agents may wake up to perform their updates at different times, the information used by each agent is assumed to be up to date, \ie,  synchronization is required.  

In accordance with the notation of the seminal work \cite[\S 7]{Bertsekas1989Parallel} we define the following local (outdated) version of the generic vector $z^k=(z_1^k,\ldots,z_m^k)$ used by agent $i$:
\begin{equation}\label{eq:outdated:RandAsyn}
z^k[i] \coloneqq \left(z_1^{\tau_1^i(k)},\ldots,z_m^{\tau_m^i(k)}\right),
\end{equation}
where $\tau_j^i(k)$ is the latest time at which the value of $z_j$ is transmitted to agent $i$ by agent $j$. 
In our setting the delay is assumed to be bounded:
\begin{ass}\label{ass:delays:RandAsyn}
  There exists an integer $B$ such that
  for all $k\geq 0$ the following holds
    $$ (\forall i,j) \quad 0\leq k-\tau_j^i(k)\leq B, \quad \textrm{and} \quad \tau^i_i(k)=k.$$

\end{ass}
The fact that each agent knows its own local variable without delay is projected in the assumption $\tau^i_i(k)=k$. This is a natural assumption and is satisfied in practice. Notice that for ease of notation we defined the complete outdated vector while in practice each agent would only keep a local copy of the coordinates that are required for its computation, see \cref{fig1:RandAsyn}. The directions of the arrows in \cref{fig1:RandAsyn} signify the nature of the coupling between two agents. For example, the arrow from $A_4$ to $A_3$ indicates that agent $A_3$ requires $z_4$ for its computation. Such a relation between agents is dependent on the formulation and the nature of coupling between agents. For instance, in \eqref{eq:main-prob_intro:RandAsyn} the coupling is represented through $f$ and possibly $N_i$. As we shall see in \cref{sec:example:RandAsyn} the coupling through $f$ may be one sided since agent $i$ may require information from agent $j$ for computing $\nabla_i f$ (the partial derivative of $f$ with respect to $i$-th coordinate) without the reverse relation being true. 
% te primal variable $x_j$ for computing $\nabla_i f$ (the partial derivative of $f$ with respect to $x_i$) without agent $j$ requiring $x_i$. 

In summary, each agent controls only one block of coordinates and updates according to 
\begin{equation*}
	z_i^{k+1}  =z_i^k - T_iz^k[i],
\end{equation*}
the result of which will be sent (possibly with different delay) to the agents that require it in their computations. 
The difficulty in this model comes from the impossibility of summing $T_iz^k[i]$ over all $i$ given that $z^k[i]$ is different for each $i$. 

% In addition to the above described delay, in \cite[Chap. 6,7]{Bertsekas1989Parallel} two models for asynchronous computations are described. The  \emph{totally asynchronous} model allows 
% two main models for asynchronous computations are the 
% \emph{partially asynchronous} \cite[Chap. 7]{Bertsekas1989Parallel} and \emph{totally asynchronous} \cite[Chap. 6]{Bertsekas1989Parallel} models. 
% asynchrony models exist....  In \cite{Hale2017Asynchronous} the authors consider a primal-dual approach 

  In addition to the above described delay, the \emph{partially asynchronous} (PA) protocol considered in \cite[\S 7]{Bertsekas1989Parallel} involves a second assumption: each agent must perform an update at least once during any time interval of length $B$. In \cite[\S 7.5]{Bertsekas1989Parallel} a PA variant of the gradient method  is studied. This analysis is further extended to the projected-gradient method in the convex case.  In \cite{Paul1991Rate} a periodic linear convergence rate is established for the projected-gradient method. The recent work \cite{Zhou2018Distributed} extends this analysis to the proximal-gradient method. 

  The aforementioned primal methods are not well equipped for problems with more complex structures as in \eqref{eq:main-prob_intro:RandAsyn}.  An efficient way to tackle such problems is to employ a class of first-order methods, referred to as primal-dual algorithms. This approach leads to \emph{fully split} algorithms eliminating the need for inverting matrices or solving inner loops. Developing PA schemes for primal-dual algorithms is not addressed here and remains a challenge. 
  % A popular approach for solving problems of this form is to employ a class of first-order methods referred to as primal-dual algorithms. 
   % An efficient way to tackle such problems is to employ a class of first-order methods referred to as primal-dual algorithms.
    % This class of algorithms are able to exploit the structure in  \eqref{eq:main-prob_intro:RandAsyn}, resulting in \emph{fully split} algorithms with a wide range of applications. 
  % Primal-dual proximal algorithms are a class of first-order methods that are are able to exploit the structure in \eqref{eq:main-prob_intro:RandAsyn} efficiently, are easy to implement, and yield the primal and dual solutions simultaneously. 
It it worth noting that \cite{Hale2017Asynchronous} considers a primal-dual framework under a different asynchronous protocol where the primal variables follow a totally asynchronous model \cite[\S 6]{Bertsekas1989Parallel}. However, the dual variables are required to be synchronized across agents.

It is worth noting that finite sum minimization over graphs is another popular problem that has been considered by many authors \cite{Nedic2009Distributed,Duchi2012Dual,Shi2015Proximal,Lobel2011Distributed,Johansson2010Randomized,Latafat2016primaldual}. Several asynchronous algorithms have been studied for this popular problem over master-worker architectures \cite{Agarwal2011Distributed,Aytekin2016Analysis,Feyzmahdavian2015asynchronous,Chang2016Asynchronous,Zhang2014Asynchronous}. Moreover, asynchronous subgradient type methods have been studied extensively for finite sum problems \cite{Nedic2001381Distributed,Tsianos2012Distributed,Terelius2011Decentralized,Wang2015Cooperative,Lin2016Distributed}. 
In this work general optimization problem \eqref{eq:main-prob_intro:RandAsyn} is considered. This framework can be used to  develop asynchronous distributed proximal algorithms for general finite sum minimization of the form
% If specialized to finite-sum minimizations over graphs it can be used to develop delay tolerant proximal algorithms for more general problems of the form
  \begin{equation*}%\label{eq:finite_sum}
    \minimize_{x\in\R^n}\,\sum_{i=1}^{m} \varPsi_i(x)+ \varTheta_{i}(x)+\varPhi_{i}(C_{i}x),
  \end{equation*}
  where  $\varPhi_i$ is smooth, $\varTheta_i$ and $\varPhi_i$ are (possibly) nonsmooth extended-real-valued and $C_i$ is a linear mapping. This problem can be reformulated as \eqref{eq:main-prob_intro:RandAsyn} using a consensus reformulation according to the communication graph. However, this is deferred to future work.

\subsection{ Motivating examples}\label{sec:MotExm:RandAsyn}

Consider the regularized logistic regression problem 
\begin{equation}\label{eq:LERM:RandAsyn}
  \minimize_{w\in\R^n}\, \sum_{i=1}^m \sum_{j\in \mathcal{I}_i} \log\left(1+ \exp\left(-y_{(j)}\langle x_{(j)}, w\rangle\right)\right) + \lambda \|w\|^2,
\end{equation}
where $w=(w_1,\ldots,w_m)\in\R^n$ is the regression vector, $\lambda$ is a positive constant, and the data is distributed between $m$ machines; the pair $(x_{(j)},y_{(j)})_{j\in\mathcal{I}_i}$ represents the data stored at the $i$-th machine. 
 The goal is to solve the global minimization via local communications which may be subject to communication delays.  Clearly \eqref{eq:LERM:RandAsyn} fits into the form of \eqref{eq:main-prob_intro:RandAsyn}: let $g$  represent the separable regularizer,  $h_i(v)=\sum_{j\in \mathcal{I}_i}\log((1+ \exp(-y_{(j)}v_j))$ the loss function, %(where $v=(v_1,\ldots,v_{|\mathcal{I}_i|})$
  $f\equiv 0$ and the rows of $N_i$ consisting of $x^\top_{(j)}$ for $j\in\mathcal{I}_i$. 
In this formulation the coupling is through the linear terms (\cf \cref{sec:generalL:RandAsyn}). 
 Distributed elastic net problem is another such example with $h_i$ representing the squared loss, $N_i$ the locally stored data, $g$ the elastic net regularizer and $f\equiv 0$. Note that in both examples $g_i$ is strongly convex and $h_i$ is continuously differentiable with Lipschitz continuous gradient, satisfying the requirements of \cref{sec:generalL:RandAsyn} for the cost functions.

Another notable example is the problem of formation control \cite{Raffard2004Distributed}, where each agent (vehicle) has its own private dynamics and cost function and the goal is to achieve a specific formation while communicating  only with a selected number of agents. Let $w_i=({\xi}_i,v_i)$ where ${\xi}_i$ and $v_i$ denote the local state and input sequences. %(not to be confused with the primal and dual variables used in the rest of the paper). 
The location of agent $i$ is given by $y_i=C {\xi}_i$ and the set of %agents with respect to which a certain distance is to be enforced 
its neighbors is denoted by $\mathcal{A}_i$. The linear dynamics of each agent over a control horizon is represented by the constraints $E_i w_i = b_i$. %, where $E_i$ is an appropriate linear mapping and $b_i$ is a vector depending on the initial state of agent $i$. 
 In order to enforce a formation between agents $i$ and $j$ the quadratic cost function $\|C(\xi_i- \xi_j) - d_{ij}\|^2$ is used where $d_{ij}$ is the target relative distance between them (refer to \cite{Raffard2004Distributed} for details). Hence, the formation control problem is formulated as the following constrained minimization:
    \begin{subequations}\label{eq:formation}
   \begin{align}
    {\minimize}&\sum_{i=1}^m \tfrac{\lambda_i}{2}\sum_{j\in\mathcal{A}_i}\|C(\xi_i- \xi_j) - d_{ij}\|^2 + \tfrac{1}{2}\sum_{i=1}^mw_i^\top Q_i w_i % \\ & +\tfrac{1}{2} \sum_{i=1}^m x_i^\top Q_i x_i +u_i^\top R_i u_i      
      \\\stt&\, E_iw_i=b_i,\quad w_i\in \mathcal{W}_i, \quad i=1,\ldots,m  
  \end{align}
   \end{subequations}
  This problem can be easily cast in the form of \eqref{eq:main-prob_intro:RandAsyn} by setting $f$ equal to the first term, $g_i$ equal to the quadratic local cost, while $h_i\circ L_i$ captures the dynamics and input and state constraints (see \Cref{sec:Formation} for more details). Therefore, the objective is to enforce a formation between agents by solving this optimization problem in presence of communication delays by allowing the agents to use outdated information. Notice that in this case the coupling between agents is enforced only through $f$. 
   This special case of \eqref{eq:main-prob_intro:RandAsyn} is studied in \cref{sec:BlockL:RandAsyn}. 
   
  % This problem can be easily cast in the form of \eqref{eq:main-prob_intro:RandAsyn} by setting $f$ equal to the first term, $g_i$ equal to the quadratic local cost, $h_i$ the indicator of the point $b_i$ and the linear mapping $N_i=E_i$. Therefore, the objective is to enforce a formation between agents by solving this optimization problem in presence of communication delays by allowing the agents to use outdated information. Notice that in this case the coupling between agents is enforced only through $f$. %, a case studied in \Cref{sec:BlockL:RandAsyn}
  %  This special case of \eqref{eq:main-prob_intro:RandAsyn} is studied in \cref{sec:BlockL:RandAsyn}. 

% The main contributions of the paper are summarized below. 

\subsection{Main contributions} 
% \begin{itemize}[leftmargin=*]
{\def\item{\par\noindent{\small\textbullet}\hspace*{4pt}}

  \item 
  To the best of our knowledge this is the first work that considers the delay described in \eqref{eq:outdated:RandAsyn} in a message-passing model for primal-dual algorithms. Unlike primal methods (gradient or proximal-gradient), the proposed algorithms are applicable to problems with complex structures as in \eqref{eq:main-prob_intro:RandAsyn} without the need to  solve inner loops or to invert matrices. 

\item The analysis of \cite{Bertsekas1989Parallel,Paul1991Rate,Zhou2018Distributed} rely on the use of the cost function as the Lyapunov function. In contrast, we show that quasi-Fej\'er monotonicity is an effective tool in the analysis of bounded delays in our setting. While this paper focuses on two particular primal-dual algorithms, a similar analysis should be applicable to others such as those proposed in \cite{Combettes2012Primaldual,BricenoArias2011monotone,Drori2015simple,Latafat2017Asymmetric,Latafat2019Randomized,Latafat2018PrimalDual}. 

% \item The analysis of \cite{Bertsekas1989Parallel,Paul1991Rate,Zhou2018Distributed} rely on the use of the cost as the Lyapunov function. In contrast, we show that under the bounded delay assumption and some  strong convexity assumption, the generated sequence is quasi-Fej\'er monotone provided that the stepsizes are sufficiently small. Moreover, linear convergence is established with an explicit convergence factor.  

\item Two primal-dual algorithms are presented: (i) when the coupling between agents is enforced only through $f$, the algorithm of \cite{Condat2013primaldual,Vu2013splitting} is considered (\cf \cref{sec:BlockL:RandAsyn}), (ii) when the coupling is through $f$ and the linear term, a new modified algorithm is developed (\cf \cref{sec:generalL:RandAsyn}). % In the second case, due to the presence of additional coupling, smaller stepsizes must be used to ensure convergence.  
In addition, linear convergence rates are established with explicit convergence factors.

% While this chapter focuses on two particular primal-dual algorithms, a similar analysis should be applicable to others such as  \cite{Combettes2012Primaldual,BricenoArias2011monotone,Drori2015simple,Latafat2017Asymmetric,Latafat2019Randomized,Latafat2018PrimalDual}. 

  \item  In \cref{sec:rand:RandAsyn} an asynchronous protocol is proposed; at every iteration agents are activated at random, and independently from one another, while performing their updates using outdated information. In practice, random activation can model the discrepancies in the speed of different agents.}

\subsection{Notation and Preliminaries}  
Throughout, $\R^n$ is the $n$-dimensional Euclidean space with inner product $\langle\cdot,\cdot\rangle$ and induced norm $\|\cdot\|$. For a positive definite matrix $P$ we define the scalar product $\langle x,y \rangle_P=\langle x,Py \rangle$ and the induced norm $\|x\|_P=\sqrt{\langle x,x\rangle_P}$. 

% For an extended-real-valued function $q$, we use $\dom q$ to denote its domain. 
For a set $C$, we denote its relative interior by $\ri C$. Let $q:\R^n\to\Rinf\coloneqq\R\cup\{+\infty\}$ be a proper closed convex function. Its domain is denoted by $\dom q$. 
Its subdifferential is the set-valued operator $\partial q: \R^n\rightrightarrows\R^n$ 
$$
\partial q(x)=\{y\in\R^n\mid\forall z\in\R^{n},\,\langle z-x,y\rangle+f(x)\leq f(z)\}.
$$
 For a positive scalar $\rho$ the \emph{proximal map} associated with $q$ is the single-valued mapping defined by
 \begin{align*}
 \prox_{\rho q}(x)&\coloneqq %(\id+ V^{-1}\partial f)^{-1}x \\&
 \argmin_{z\in\R^n}\{ q(z) + \tfrac{1}{2\rho}\|x-z\|^2\}.
 \end{align*}
 The \emph{Fenchel conjugate} of $q$, denoted by $q^*$, is defined as 
$q^*(v)\coloneqq \sup_{x\in\R^n}\{ \langle v,x\rangle-q(x)\}.$ 
The function $q$ is said to be $\mu$-convex  with $\mu\geq 0$ if $q(x)- \tfrac{\mu}{2}\|x\|^2$ is convex. 

%Next we define the notion of quasi-Fej\'er monotonicity that simplifies our convergence analysis.
% We make use of the notion quasi-Fej\'er monotonicity 
 A sequence $\seq{w^k}$ is said to be \emph{quasi-Fej\'er} monotone with respect to a nonempty set $\mathcal{U}\subseteq \R^n$ if for all $v\in\mathcal{U}$, there exists a summable nonnegative sequence $\seq{\varepsilon^k}$ such that for all $k\in\N$ 
\begin{equation*}
  \|w^{k+1}-v\|^2\leq \|w^{k}-v\|^2 + \varepsilon^k. 
\end{equation*}
This definiton is referred to as type III quasi-Fej\'er monotonicity in \cite{Combettes2001QuasiFejerian}. 
Moreover, given a postive definite matrix $P$, we say that a sequence is $P$-quasi-Fej\'er monotone  with respect to $\mathcal{U}\subseteq \R^n$ if it is quasi-Fej\'er monotone with respect to $\mathcal{U}$ in the space equipped with $\langle\cdot,\cdot\rangle_P$. Finally, the positive part of $x\in\R$ is denoted by $[x]_+\coloneqq\max\{x,0\}$.

 Let $l$ and $d$ be two nonnegative scalars. For a given sequence $\seq{w^t}[t\in\N]$ we define the following for simplicity of notation:
\begin{equation*}%\label{eq:simplifying:RandAsyn}
  \sw{l}{d}{w} \coloneqq 
      \sum_{\tau=[k-B+1-l]_+}^{\smash{k-d}} \|w^{\tau+1}-w^\tau\|^2.
\end{equation*}
Summing $\sw{l}{d}{w}$ over $k$ from $0$ to $p>0$ and noting that each term is repeated at most $B+l-d$ times we obtain:   
\begin{align}  
    \sum_{k=0}^p \sw{l}{d}{w} {}&{}\leq (B+l-d) \sum_{\mathclap{k=[1-B-l]_+}}^{p-d} \|w^{k+1}-w^k\|^2\leq (B+l-d) \sum_{\mathclap{k=0}}^{\smash{p}} \|w^{k+1}-w^k\|^2.
     \label{eq:zeroton:RandAsyn} 
  \end{align}
  This inequality plays an important role in our convergence analysis.

 \section{Problem setup} \label{sec:setup}
   \label{sec:example:RandAsyn}

   %!TEX root = ../../../main.tex
   
   Throughout this paper the primal and  dual vectors, denoted $x$ and $u$, are assumed to be composed of $m$ blocks as follows 
   \begin{equation*}
   x=\left(x_1,\ldots,x_m\right)\in\R^n, \; 
   u=\left(u_1,\ldots,u_m\right)\in\R^r, 
   \end{equation*}
   where $x_i\in\R^{n_i}$ and $u_i\in\R^{r_i}$. Moreover, we denote the stacked primal and dual variable as $z=(x,u)$. 
   
   Consider a linear mapping  $L: \R^{n} \to \R^r$ that is partitioned as follows:
   \begin{equation}\label{eq:defL:RandAsyn}
     L=\begin{pmatrix}
   L_{11} & \cdots & L_{1m}\\
   
   \vdots& \ddots & \vdots\\
   L_{m1} & \cdots & L_{mm}
   \end{pmatrix},
   \end{equation}
   where $L_{ij}: \R^{n_i}\to \R^{r_j}$. Furthermore, the $i$-th (block) row of $L$ is denoted by $L_{i\mydot}: \R^{n}\to \R^{r_i}$ and the $i$-th (block)  column  by $L_{\mydot i}: \R^{n_i}\to \R^r$, \ie,    
   $$
   L=\begin{pmatrix}
   L_{1\mydot}\\
   %L_{2}\\
   \vdots\\
   L_{m\mydot}
   \end{pmatrix}= \begin{pmatrix}
   L_{\mydot 1}& \cdots
   &
   L_{\mydot m}
   \end{pmatrix}.%\begin{pmatrix}N_{1} & N_{2} & \cdots & N_{m}\end{pmatrix}.
   $$
   The following holds
   \begin{equation}\label{eq:TheObvs:RandAsyn}
     \langle Lx,u\rangle = \sum_{i=1}^m\langle L_{i\mydot}x,u_i\rangle =\sum_{i=1}^m\langle x_i,L_{\mydot i}^\top u\rangle.
   \end{equation}
   
   Consider the structured optimization problem \eqref{eq:main-prob_intro:RandAsyn} where the linear mapping $N_i$ has been replaced by $L_{i\mydot}$ defined above in order to clarify the structure of the mapping:
     \begin{equation} \label{eq:main-prob:RandAsyn}
     \underset{x\in\R^n}{\minimize}\ {f}({x})+\sum_{i=1}^m\big({g_i}({x_i})+h_i(L_{i\mydot}x)\big).
     \end{equation}
    The cost functions $g_i$ and $h_i\circ L_{i\mydot}$ are private functions belonging to agent $i$. The coupling between agents is through the smooth term $f$ and the linear term $L_{i\mydot}x$. An agent $i$ is assumed to have access to the information required for its computation, be it outdated, \cf \Cref{Alg:PA-VU-Diagonal:RandAsyn,Alg:PA-VU:RandAsyn}. %, \ie, it is able to receive information from agent $j$ if either $L_{ij}\neq0$ or $L_{ji}\neq0$ or if $\nabla_i f$ depends on $x_j$, \textit{cf.} \Cref{Alg:PA-VU-Diagonal:RandAsyn,Alg:PA-VU:RandAsyn}. 
   
   Let the following assumptions hold
   \begin{ass} \label{ass:optProb:RandAsyn}
   	\hspace{0cm}
   	\begin{enumerate}
   		\item \label{ass:optProb-1:RandAsyn} For $i=1,\ldots,m$, $h_i:\R^{r_i}\to\Rinf$ is proper closed convex function, and $L_{i\mydot}:\R^n \to \R^{r_i}$ is a linear mapping;
   		\item \label{ass:DiagL-1:RandAsyn} (strong convexity)  For $i=1,\ldots,m$, $g_i:\R^{n_i}\to\Rinf$ is proper closed $\mu_g^i$-strongly convex for some $\mu_g^i>0$;
   		\item \label{ass:optProb-2:RandAsyn}  ${f}:\R^{n}\to\R$ is  convex, continuously differentiable, and for some $\beta\in[0,\infty)$, $\nabla f$ is $\beta$-Lipschitz continuous:   
   		\begin{equation*}%\label{eq:Lipz:RandAsyn}
   		\|\nabla f(x)-\nabla f(x^\prime) \|\leq \beta\|x-x^\prime\|,\quad \forall x,x^\prime\in \R^n. 
   		\end{equation*} 
   		%  ${f}:\R^{n}\to\R$ is  convex, continuously differentiable, and for some $\beta\in[0,\infty$, $\nabla f$ is $\beta$-Lipschitz continuous with respect to the metric induced by $Q\succ0$:   
   		% \begin{equation*}%\label{eq:Lipz:RandAsyn}
   		% \|\nabla f(x)-\nabla f(x^\prime) \|_{Q^{-1}}\leq \beta\|x-x^\prime\|_Q,\quad \forall x,x^\prime\in \R^n. 
   		% \end{equation*} 
   		\item \label{ass:optProb-3:RandAsyn} For every $i=1,\ldots,m$ there exists a nonnegative constant $\bar{\beta}_i$ such that for all $x,x^\prime\in\R^n$ satisfying $x_i=x^\prime_i$: % we have 
   		\begin{equation}\label{eq:Lipz:RandAsyn}
   		\|\nabla_i f(x)-\nabla_i f(x^\prime) \|\leq \bar{\beta}_i\|x-x^\prime\|. 
   		\end{equation}  
   		% \item \label{ass:optProb-4:RandAsyn}The set of solutions to \eqref{eq:main-prob:RandAsyn} is nonempty. 
   		\item \label{ass:optProb-5:RandAsyn} The set of solutions to \eqref{eq:main-prob:RandAsyn} is nonempty. Moreover, there exists $x_i\in\ri \dom g_i$, for $i=1,\ldots,m$ such that $L_{j\mydot}x\in\ri \dom h_j$, for $j=1,\ldots,m$.  
   	\end{enumerate}
   \end{ass}
   % Note that in \Cref{ass:optProb-2:RandAsyn} in place of the canonical norm, it is possible to consider Lipschitz continuity of $\nabla f$ with respect to a metric $Q\succ0$. Our subsequent analysis would carry over with minor modification. We avoid this here for simplicity. 
   \Cref{ass:optProb-3:RandAsyn} quantifies the strength of the coupling (through $f$) between agents \cite[\S 7.5]{Bertsekas1989Parallel}. In particular, if $f$ is separable, \ie, $f(x)=\sum_{i=1}^m f_i(x_i)$, then there is no coupling and $\bar{\beta}_i=0$. 
   % Moreover, the Lipschitz constant $\beta$ is a trivial upper bound for $\bar{\beta}_i$.  
   
   Problem \eqref{eq:main-prob:RandAsyn} can be compactly represented as
    \begin{equation*} %\label{eq:main-prob:RandAsyn}
     \underset{x\in\R^n}{\minimize}\ {f}({x})+{g}({x})+h(Lx),
     \end{equation*}
     where $g(x)=\sum_{i=1}^m g_i(x_i)$, $h(u)= \sum_{i=1}^m h_i(u_i)$, and $L$ is as in \eqref{eq:defL:RandAsyn}. The dual problem is given by 
   \begin{equation*} %\label{eq:dualprob:RandAsyn}
     \minimize_{u\in\R^r} (g + f)^*(-L^\top u)+ h^*(u).
   \end{equation*}
   By \Cref{ass:DiagL-1:RandAsyn} the set of solutions to \eqref{eq:main-prob:RandAsyn} is nonempty and unique. Under the constraint qualification of \Cref{ass:optProb-5:RandAsyn}, the set of solutions to the dual problem is nonempty (not necessarily a singleton) and the duality gap is zero \cite[Cor. 31.2.1]{Rockafellar1970Convex}. Furthermore, $x^\star$ is a primal solution and $u^\star$ is a dual solution if and only if the pair $(x^\star,u^\star)$ satisfies
   % We say that $(x^\star,u^\star)$ is a primal-dual solution if it satisfies  
      \begin{equation} \label{eq:primal-dual:RandAsyn}
      \begin{cases}
      0 \in\partial g(x^\star)+\nabla f(x^\star)+L^{\top}u^\star
   , & \ \ \ \ \ \ \\ 0 \in\partial h^{*}(u^\star)-Lx^\star
   .& \ \ \ \ \ \ 
      \end{cases} 
      \end{equation}
    Such a point is called a primal-dual solution and the set of all primal-dual solutions is denoted by $\mathcal{S}$. %For any primal solution $x^\star$ and dual solution $u^\star$, the pair $(x^\star,u^\star)$ is called a primal-dual solution and satisfies 
    
     %, and $\mathcal{S}=X^\star\times U^\star$ .
   
   For each agent $i\in\{1,\ldots,m\}$ define positive stepsizes $\gamma_i$, $\sigma_i$  associated with the primal and the dual variables, respectively.
   Let us also define the following parameters
   % \begin{align}
   %     \bar{\beta} &\coloneqq \left(\bar{\beta}_1,\ldots,\bar{\beta}_m\right),\nonumber\\
   %     \Gamma  &  \coloneqq \blkdiag\left(\gamma_1 \I_{n_1},\ldots,\gamma_m\I_{n_m}\right),\nonumber\\
   %     \Sigma  & \coloneqq \blkdiag\left(\sigma_1\I_{r_1},\ldots,\sigma_m\I_{r_m}\right),\nonumber\\
   %     D & = \blkdiag(\Gamma^{-1},\Sigma^{-1}). \label{eq:D:RandAsyn}
   % \end{align}
   \begin{align*}
       \Gamma    \coloneqq \blkdiag\left(\gamma_1 \I_{n_1},\ldots,\gamma_m\I_{n_m}\right),\quad
       \Sigma   \coloneqq \blkdiag\left(\sigma_1\I_{r_1},\ldots,\sigma_m\I_{r_m}\right),
   \end{align*}
   and 
   \begin{align}\label{eq:D:RandAsyn}
       D & = \blkdiag(\Gamma^{-1},\Sigma^{-1}), \quad 
       \bar{\beta} \coloneqq \left(\bar{\beta}_1,\ldots,\bar{\beta}_m\right).
   \end{align}

   The algorithm of V\~u and Condat \cite{Vu2013splitting,Condat2013primaldual} for  solving \eqref{eq:main-prob:RandAsyn} is given by the following updates for agent $i$ at iteration $k$: 
   \begin{subequations}\label{eq:VuCondat:RandAsyn}
   	\begin{align}
   	x_{i}^{k+1}	&=\prox_{\gamma_i g_{i}}\left(x_{i}^{k}-\gamma_i L_{\mydot i}^{\top}u^{k}-\gamma_i\nabla_{i}f(x^{k})\right) \\
   	u_{i}^{k+1} &=\prox_{\sigma_i h_i^{*}}\left(u_{i}^{k}+\sigma_i L_{i\mydot}(2x^{k+1}-x^{k})\right). \label{eq:VuCondat-b:RandAsyn}
   \end{align}
   \end{subequations}
   In a synchronous implementation of the algorithm, each agent requires the latest variables $x^k$, $x^{k+1}$ and $u^k$ in the above updates, which may not be available due to communication delays. In the case when $L$ is  block-diagonal the coupling between agents is enforced only through the smooth function $f$ (in \eqref{eq:VuCondat:RandAsyn} agent $i$ requires the primal variables that are required for computing $\nabla _i f$). 
   We refer to this type of coupling as \emph{partial coupling}. In \Cref{sec:BlockL:RandAsyn} the updates in \eqref{eq:VuCondat:RandAsyn} are considered for the case of partial coupling (\cf \cref{Alg:PA-VU:RandAsyn}).
   
   % In \Cref{sec:BlockL:RandAsyn} we consider the case when $L$ is block-diagonal, referred to as \emph{partial coupling}), where the coupling between agents is enacted only through the smooth function $f$ (\cf \Cref{Alg:PA-VU:RandAsyn}). 
   More generally when  $L$ is not block-diagonal, the coupling between agents is enacted through  the linear mapping $L$ (the linear operations $L_{\mydot i}^\top$ and $L_{i \mydot}$ in \eqref{eq:VuCondat-b:RandAsyn} require additional communication between agents) and possibly the smooth function $f$. We refer to this type of coupling as \emph{total coupling}. 
   % We refer to the more general case when $L$ is not block-diagonal as \emph{total coupling}. In this case the coupling between agents is enacted through possibly both the smooth function $f$ and the linear mapping $L$ (the linear operations $L_{\mydot i}^\top$ and $L_{i \mydot}$ in \eqref{eq:VuCondat-b:RandAsyn} require additional communication between agents). 
   This  case is considered in 
   \Cref{sec:generalL:RandAsyn} where an \emph{Arrow-Hurwicz-Uzawa} type \cite{Arrow1958Studies} (referred hereafter as AHU-type) primal-dual algorithm  is proposed in place of \eqref{eq:VuCondat:RandAsyn}. The synchronous iterations of the AHU-type algorithm for agent $i$ at iteration $k$ is given by: 
   \begin{subequations} \label{Alg:AHU:RandAsyn}
   		\begin{align}
   	x_{i}^{k+1}	&=\prox_{\gamma_i g_{i}}\left(x_{i}^{k}-\gamma_i L_{\mydot i}^{\top}u^{k}-\gamma_i\nabla_{i}f(x^{k})\right) \\
   	u_{i}^{k+1} &=\prox_{\sigma_i h_i^{*}}\left(u_{i}^{k}+\sigma_i L_{i\mydot}x^{k}\right). 
   \end{align}
   \end{subequations}
    Differently from \eqref{eq:VuCondat:RandAsyn}, in the dual update linear operator is applied to $x^k$ in place of $2x^{k+1}-x^k$. When operating under the bounded delay assumption, the AHU-type primal-dual algorithm, \eqref{Alg:AHU:RandAsyn}, allows for larger stepsizes compared to \eqref{eq:VuCondat:RandAsyn}.
   
   It is worth noting that \eqref{Alg:AHU:RandAsyn} can be seen as a forward-backward iteration:
   \begin{equation*}
   	z^{k+1} = (D + T_2)^{-1}(D - T_1)z^k,
   \end{equation*}
   where $T_2=(\partial g,\partial h^*)$,  $T_1: (x,u) \mapsto (\nabla f(x)+L^\top u,-Lx)$, and $D$ as  defined in \eqref{eq:D:RandAsyn}. 
     Even in the synchronous case this algorithm is not in general convergent. The convergence may be established when $g$ and $h^*$ are strongly convex \cite[Assumption A]{Chen1997Convergence}. 
   Moreover, when $g$ is the indicator of a set and $h$ is the support of a set, the AHU-type algorithm resembles another primal-dual AHU-type algorithm considered in \cite{Nedic2009Subgradient,Hale2017Asynchronous} for solving saddle-point problems. 
    % \section{Notation and preliminary results}
\section{The case of partial coupling} \label{sec:BlockL:RandAsyn}

   %!TEX root = ../../../main.tex
   
   Throughout this section we consider the optimization problem \eqref{eq:main-prob:RandAsyn} with partial coupling (when $L$ has a block-diagonal structure).  
    In this case, the coupling between agents is enacted only through the smooth function $f$ (and not through $L$). 
   The example of formation control in \Cref{sec:MotExm:RandAsyn} can be cast in this form.  
   
   Under this setting problem \eqref{eq:main-prob:RandAsyn} becomes
     \begin{equation*} 
     \underset{x\in\R^n}{\minimize}\ {f}({x})+\sum_{i=1}^m\big(g_i(x_i)+h_i(L_{ii}x_i)\big),
     \end{equation*}
     where $L_{ii}$ is the $i$-th diagonal block of $L$, see \eqref{eq:defL:RandAsyn}. 
      In order to solve this problem with the iterates in \eqref{eq:VuCondat:RandAsyn}, agent $i$ must receive those $x_j$'s that are required for the computation of $\nabla_i f$ and all other operations are local.    Let us define two sets of indices: those that are required to send their variables to $i$:  
      % Let us define the set of agents that are required to send their variables to $i$ as follows: 
   \begin{align*}
   	\mathcal{N}_i^{\rm in} \coloneqq \{j \mid \nabla_if \;\textrm{depends on}\; x_j\}, 
   \end{align*}
   and those that $i$ must send $x_i$ to as $\mathcal{N}_i^{\rm out}\coloneqq \{j \mid i \in \mathcal{N}_j^{\rm in}\}$.

    \Cref{Alg:PA-VU-Diagonal:RandAsyn} summarizes the proposed scheme. At every iteration each agent $i$ performs the updates described in \eqref{eq:VuCondat:RandAsyn} using the last  information it has received from agents $j\in\mathcal{N}_i^{\rm in}$. It then transmits the updated $x_i^{k+1}$ to the agents that require it (possibly with different delay). Note that $x^k[i]$ was defined as the outdated version of the full vector $x^k$ for simplicity of notation, and in practical implementation it would only involve the coordinates that are required for the computation of $\nabla_i f$.   
    
       \newcommand{\INDSTATE}[1][1]{\State\hspace{-2pt}}
       \begin{algorithm}[H]
         \caption{V\~u-Condat algorithm with bounded delays}
         \label{Alg:PA-VU-Diagonal:RandAsyn}
         \begin{algorithmic}[1]
         \algnotext{EndFor}
           \item[\textbf{Initialize:}] $x_i^0\in\R^{n_i}$, $u_i^0\in\R^{r_i}$ for  $i\in\{1,\ldots,m\}$.
           \item[\bf For] $k=0,1,\ldots$ {\bf do}
             \item[\hspace*{13pt}\bf For] {each agent $i=1,\ldots,m$} {\bf do}
                     \Statex\hspace*{-7pt} {\it\small\% Local updates}
   
           \Statex \hspace*{11pt} {\small\sl\color{black!80}perform the local updates using the last received  information, \ie, the locally  stored \linebreak
           \hspace*{12pt} vector  $x^k[i]$ as  defined in \eqref{eq:outdated:RandAsyn}}:  
           \State\hspace*{\algorithmicindent}\hspace*{-1pt}$x_{i}^{k+1}  =\prox_{\fillwidthof[l]{\sigma_i h_i}{\gamma_i g_{i}}}\left(x_{i}^{k}-\gamma_i L_{ii}^{\top}u_i^{k}-\gamma_i\nabla_{i}f(x^{k}[i])\right)$
           \State\hspace*{\algorithmicindent}\hspace*{-1pt}$u_{i}^{k+1} =\prox_{\sigma_i h_i^{*}}\left(u_{i}^{k}+\sigma_i L_{ii}(2x_i^{k+1}-x_i^{k})\right)$           
           \Statex\hspace*{-7pt} {\it\small\% Broadcasting to neighbors}
           \State \hspace*{\algorithmicindent}\hspace*{-1pt}send $x_i^{k+1}$ to all $j\in \mathcal{N}_i^{\rm out}$\quad{\footnotesize(possibly with different delays)}%
         \end{algorithmic}
       \end{algorithm}  
   
   % \newcommand{\INDSTATE}[1][1]{\State\hspace{-2pt}}
   %     \begin{algorithm}[H]
   %       \caption{V\~u-Condat algorithm with bounded delays}
   %       \label{Alg:PA-VU-Diagonal:RandAsyn}
   %       \begin{algorithmic}[1]
   %       \algrenewcommand\algorithmicindent{1em}%
   %       \algnotext{EndFor}
   %         \item[\textbf{Initialize:}] $x_i^0\in\R^{n_i}$, $u_i^0\in\R^{r_i}$ for  $i\in\{1,\ldots,m\}$.
   %         \item[\bf For] $k=0,1,\ldots$
   %           \For {each agent $i=1,\ldots,m$} 
   
   %         \State perform the local updates using the last received \linebreak
   %         \hspace*{4pt} information, \ie, the locally stored vector $x^k[i]$ as \linebreak
   %         \hspace*{7pt} defined in \eqref{eq:outdated:RandAsyn}:  
   %         \Statex\hspace*{\algorithmicindent}\hspace*{5pt}$x_{i}^{k+1}  =\prox_{\fillwidthof[l]{\sigma_i h_i}{\gamma_i g_{i}}}\left(x_{i}^{k}-\gamma_i L_{ii}^{\top}u_i^{k}-\gamma_i\nabla_{i}f(x^{k}[i])\right)$
   %         \Statex\hspace*{\algorithmicindent}\hspace*{5pt}$u_{i}^{k+1} =\prox_{\sigma_i h_i^{*}}\left(u_{i}^{k}+\sigma_i L_{ii}(2x_i^{k+1}-x_i^{k})\right)$           
   %         \State send $x_i^{k+1}$ to all $j\in \mathcal{N}_i^{\rm out}$\quad{\footnotesize(possibly with different delays)}%
   %         \EndFor
   %       \end{algorithmic}
   %     \end{algorithm}  
   
       As shown in \Cref{tm:main-diagonal:RandAsyn}, for small enough stepsizes the generated sequence converges to a primal-dual solution under the bounded delay assumption, and provided that functions $g_i$ are strongly convex. Such needed requirements are summarized below:
   \begin{ass} \label{ass:DiagL:RandAsyn}(stepsize condition) For $i=1,\ldots,m$, the stepsizes $\sigma_i,\gamma_i>0$ satisfy the following assumption:
            \begin{equation}\label{eq:conv-diag:RandAsyn}
             \gamma_{i}<\frac{1}{\sigma_{i}\|L_{ii}\|^{2}+\beta+\tfrac{B^{2}}{2}\|\bar{\beta}\|_{M_{g}^{-1}}^{2}}, \;\, \textrm{where} \quad M_g =   \blkdiag\left(\mu_g^1 I_{n_1},\ldots,\mu_g^mI_{n_m}\right).
           \end{equation}
           % where 
   % \begin{equation} \label{eq:Mg:RandAsyn}
   %   M_g =   \blkdiag\left(\mu_g^1 I_{n_1},\ldots,\mu_g^mI_{n_m}\right).
   % \end{equation}
   %   \begin{enumerate}
   %      % \item \label{ass:DiagL-1:RandAsyn} (Strong convexity) $g_i$ is $\mu_g^i$-convex for some $\mu_g^i>0$. 
   %      \item \label{ass:DiagL-2:RandAsyn}(Stepsize condition) The stepsizes $\sigma_i,\gamma_i>0$ satisfy the following assumption:
   %          \begin{equation}\label{eq:conv-diag:RandAsyn}
   %           \gamma_{i}<\frac{1}{\sigma_{i}\|L_{ii}\|^{2}+\beta+\tfrac{B^{2}}{2}\|\bar{\beta}\|_{M_{g}^{-1}}^{2}}, 
   %         \end{equation}
   %         where 
   % \begin{equation} \label{eq:Mg:RandAsyn}
   %   M_g =   \blkdiag\left(\mu_g^1 I_{n_1},\ldots,\mu_g^mI_{n_m}\right).
   % \end{equation}
   %   \end{enumerate}
   \end{ass}
   According to \Cref{ass:DiagL:RandAsyn} a one time global communication of $\|\bar{\beta}\|_{M_{g}^{-1}}$ and $\beta$ is required when initiating the algorithm. %, which are communicated only once when initiating the algorithm.  
   
   According to \eqref{eq:conv-diag:RandAsyn} as the  upper bound on delay, $B$, and coupling constants $\bar{\beta}_i$ (as defined in \eqref{eq:Lipz:RandAsyn}) increase, 
    smaller stepsizes should be used. This is intuitive given that in either case the agents have a lower confidence in the currently stored vectors and thus should take smaller steps. Moreover, the higher the modulus of strong convexity, the larger steps agents are allowed to take, countering the effect of the delay.
   
   In the case when the smooth term is separable, the problem is decoupled ($\bar{\beta}_i=0$ for all $i$) and the stepsize condition for each agent does not depend on the delay. The same stepsize condition would be required in the case of synchronous updates ($B=0$). Note that, in this case \eqref{eq:conv-diag:RandAsyn} is still more conservative than the classical results which would require $\gamma_i<1/(\sigma_{i}\|L_{ii}\|^{2}+\beta/2)$ (the difference being a $\beta$ appearing in place of $\beta/2$). 
   % Moreover note that, in the case of synchronous updates ($B=0$) the condition is still more conservative than the classical results which would require $\gamma_i<1/(\sigma_{i}\|L_{ii}\|^{2}+\beta/2)$ (the difference being a $\beta$ appearing in place of $\beta/2$). 

   Before proceeding with the convergence results, let us define the following 
    \begin{equation}\label{eq:P:RandAsyn}
     P\coloneqq\begin{pmatrix}
       \Gamma^{-1} & -L^\top\\ -L & \Sigma^{-1} 
     \end{pmatrix}.
   \end{equation} 
   Noting that $\Sigma,\Gamma$ are positive definite, and using Schur complement we have that $P$ is positive definite if and only if $\Gamma^{-1}-L^\top \Sigma L$ is positive definite, a condition that holds if \eqref{eq:conv-diag:RandAsyn} is satisfied (since $L$ has a block-diagonal structure). % Given the block-diagonal structure of $\Sigma,\Gamma,L$, and noting that $\sigma_i L_{ii}^\top  L_{ii}\leq \sigma_i \|L_{ii}\|^2$, we conclude that if \eqref{eq:conv-diag:RandAsyn} holds then  $P$ is positive definite.    
   
   % summarizes the convergence results for \Cref{Alg:PA-VU-Diagonal:RandAsyn}.
   % that strong convexity of $g_i$ can be used to counteract the effect of these error terms provided that the stepsizes satisfy \Cref{ass:DiagL-2:RandAsyn}.
      In order to establish convergence we first derive the following intermediate result. 
   \begin{lem}\label{lem:gnhineq:RandAsyn}
     Suppose that \Cref{ass:delays:RandAsyn}, \ref{ass:optProb:RandAsyn} and \ref{ass:DiagL-1:RandAsyn}  are satisfied. Consider the sequence generated by \Cref{Alg:PA-VU-Diagonal:RandAsyn}. Then, for any $(x^\star,u^\star)\in\mathcal{S}$ the following hold:
     \begin{enumerate}
   %     \item \(\hphantom{\leq{}}\sum_{i=1}^{m}\langle \nabla_{i}f(x^{k}[i])-\nabla_{i}f(x^{k}), x_i^\star-v_{i}\rangle \)\\[5pt] \noindent
   %   \(\displaystyle
   %   \leq \tfrac{\epsilon_1}{2}\|v-x^\star\|^{2}_{M_g}+\tfrac{B}{2\epsilon_1}\|\bar{\beta}\|^2_{M_g^{-1}}\sw{1}{1}{x},
   % \)  
       \item \label{eq:fgfirst:RandAsyn}\(\hphantom{\leq{}}\|x^{k+1}-x^{\star}\|_{2M_{g}+\Gamma^{-1}}^{2} -\|x^{k}-x^{\star}\|_{\Gamma^{-1}}^{2}+\|x^{k}-x^{k+1}\|_{\Gamma^{-1}-\beta I}^{2}\)\\[5pt] \noindent
     \(
     \leq 2\sum_{\smash{i=1}}^m\langle \nabla_{i}f(x^{k}[i])-\nabla_i f(x^{k}),x_{i}^{\star}-x_{i}^{k+1}\rangle {}+{} 2\langle L^{\top}(u^{k}-u^{\star}),x^{\star}-x^{k+1}\rangle
   \);   
      \item \label{eq:hfirst:RandAsyn}\(\hphantom{\leq{}}\|u^{k+1}-u^{\star}\|_{\Sigma^{-1}}^{2}-\|u^{k}-u^{\star}\|_{\Sigma^{-1}}^{2}+\|u^{k}-u^{k+1}\|_{\Sigma^{-1}}^{2}\)\\[5pt] \noindent
     \(\leq{}
     2\langle L(2x^{k+1}-x^{k}-x^\star),u^{k+1}-u^{\star}\rangle. 
   \)   
     % \item \label{eq:hfirst:RandAsyn} \begin{equation}
     %   \|u^{k+1}-u^{\star}\|_{\Sigma^{-1}}^{2}\!-\|u^{k}-u^{\star}\|_{\Sigma^{-1}}^{2}\!+\|u^{k}-u^{k+1}\|_{\Sigma^{-1}}^{2}{\leq{}}\!2\langle L(2x^{k+1}-x^{k}-x^\star),u^{k+1}-u^{\star}\rangle. 
     % \end{equation}
     \end{enumerate}
   \end{lem} 
   \begin{proof}
   % Consider the primal update for $x_i^{k+1}$ and use \eqref{eq:strconvexity:RandAsyn}: 
   To derive the first inequality use \eqref{eq:strconvexity:RandAsyn} with $q=g_i$, $r=x_i^\star$, $\omega_\rho= x_i^{k+1}$ and $\omega=x_{i}^{k}-\gamma_i L_{ii}^{\top}u^{k}[i]-\gamma_i\nabla_{i}f(x^{k}[i])$ (using the update for the primal variable $x_i^{k+1}$ in the algorithm): 
   \begin{align}
     g_{i}(x_{i}^{\star})-g_{i}(x_{i}^{k+1})\geq{}&{}\langle\nabla_{i}f(x^{k}[i])+L_{ii}^{\top}u^{k}_i,x_{i}^{k+1}-x_{i}^{\star}\rangle\nonumber \\&+\tfrac{1}{\gamma_{i}}\langle x_{i}^{k}-x_{i}^{k+1},x_{i}^{\star}-x_{i}^{k+1}\rangle+\tfrac{\mu_{g}^{i}}{2}\|x_{i}^{\star}-x_{i}^{k+1}\|^{2}.\label{eq:g_iLB:RandAsyn}
   \end{align}
   Let $f^\star\coloneqq f(x^\star)$, and $f^k\coloneqq f(x^k)$. By convexity of $f$ we have 
   % $$ f^\star - f^k \geq \langle \nabla f(x^k),x^{\star}-x^k\rangle, $$
   $  f^k -f^\star  \leq \langle \nabla f(x^k),x^k-x^{\star}\rangle$, 
   and by Lipschitz continuity of $\nabla f$ we have 
    $f^{k+1}\leq f^k+ \langle \nabla f(x^k),x^{k+1}-x^k\rangle + \tfrac{\beta}{2}\|x^{k+1}-x^k\|^2$.
   Summing the last two inequalities yields
   \begin{align}
     f^{k+1}\!-\!f^\star\leq \!\langle \nabla f(x^{k}),x^{k+1}-x^\star\rangle +\tfrac{\beta}{2}\|x^{k+1}-x^{k}\|^{2}.\label{eq:f-threepoints:RandAsyn}
   \end{align}
   For notational convenience we use $F\coloneqq f+g$ and $F^\star\coloneqq F(x^\star)$, $F^k\coloneqq F(x^{k})$. Noting that $g$ is separable, sum \eqref{eq:g_iLB:RandAsyn} over $i$, add \eqref{eq:f-threepoints:RandAsyn} and use \eqref{eq:basic-equality:RandAsyn} (with $V=\Gamma^{-1}$, $a=x^k$, $b=x^{k+1}$, $c=x^\star$):
    \begingroup
   \allowdisplaybreaks
   \begin{align}
     F^{\star}-F^{k+1}\geq{}&\hphantom{{}-{}}\tfrac{1}{2}\|x^{k}-x^{k+1}\|_{\Gamma^{-1}-\beta I}^{2}+\tfrac{1}{2}\|x^{k+1}-x^{\star}\|_{\Gamma^{-1}+M_g}^{2}\nonumber \\&-\tfrac{1}{2}\|x^{k}-x^{\star}\|_{\Gamma^{-1}}^{2}+\sum_{i=1}^{m}\langle-L_{ii}^{\top}u^{k}_i,x_{i}^{\star}-x_{i}^{k+1}\rangle\nonumber \\&+\sum_{i=1}^{\smash{m}}\langle\nabla_{i}f(x^{k})-\nabla_{i}f(x^{k}[i]),x_{i}^{\star}-x_{i}^{k+1}\rangle. \nonumber%\label{eq:g_iLB:RandAsyn}
   \end{align}
   \endgroup
   On the other hand by convexity of $f$, strong convexity of $g_i$ and \eqref{eq:primal-dual:RandAsyn} we have 
   \begin{equation*}
     F^{k+1}-F^{\star}\geq\langle-L^{\top}u^{\star},x^{k+1}-x^{\star}\rangle+\tfrac{1}{2}\|x^{k+1}-x^{\star}\|_{M_g}^{2}. 
   \end{equation*}
     Summing the last two inequalities, multiplying by 2 and a simple rearrangement yields \eqref{eq:fgfirst:RandAsyn}. 
   
     For the second inequality, consider the update for $u_i^{k+1}$ and use \eqref{eq:strconvexity:RandAsyn}:
   \begin{align}h_{i}^{*}(u_{i}^{\star})-h_{i}^*(u_{i}^{k+1}){}\geq {}&  \langle L_{ii}\left(2x^{k+1}_i-x^{k}_i\right),u_{i}^{\star}-u_{i}^{k+1}\rangle+\tfrac{1}{\sigma_{i}}\langle u_{i}^{k}-u_{i}^{k+1},u_{i}^{\star}-u_{i}^{k+1}\rangle.
   \label{eq:strh:RandAsyn}
   \end{align}
   Furthermore, by convexity of $h_i$ and using \eqref{eq:primal-dual:RandAsyn} we have 
   % \begin{align*}
     $h^{*}(u^{k+1})-h^{*}(u^{\star})\geq\langle Lx^{\star},u^{k+1}-u^{\star}\rangle$.
   % \end{align*}
   Sum \eqref{eq:strh:RandAsyn} over all $i$, add the last inequality, and use \eqref{eq:basic-equality:RandAsyn} to derive the inequality.
     \end{proof}
    Our analysis in \Cref{tm:main-diagonal:RandAsyn} relies on showing that the generated sequence is quasi-Fej\'er monotone  with respect to the set of primal-dual solutions in the space  equipped with the inner product $\langle\cdot,\cdot\rangle_P$. Notice that without communication delays ($B\equiv0$), this analysis leads to the usual  Fej\'er monotonicity of the sequence. % (provided that the stepsizes are small enough). 
    The use of outdated information introduces additional error terms that are shown to be tolerated by the algorithm if the stepsizes are small enough and the functions $g_i$ are strongly convex. 
   
     \begin{thm}\label{tm:main-diagonal:RandAsyn}
       Suppose that \Cref{ass:optProb:RandAsyn,ass:delays:RandAsyn,ass:DiagL:RandAsyn} are satisfied. 
   Then, the sequence $\seq{z^k}=\seq{x^k,u^k}$ generated by \Cref{Alg:PA-VU-Diagonal:RandAsyn} is $P$-quasi-Fej\'er monotone with respect to $\mathcal{S}$. Furthermore, $\seq{z^k}$ converges to some $z^\star\in\mathcal{S}$. % $(x^\star,u^\star)\in\mathcal{S}$. 
     \end{thm}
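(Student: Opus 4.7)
The plan is to establish $P$-quasi-Fej\'er monotonicity of $\seq{z^k}$ with summable error, from which convergence to some $z^{\star}\in\mathcal{S}$ will follow by a standard argument. I would first rewrite the primal and dual updates of \Cref{Alg:PA-VU-Diagonal:RandAsyn} as subgradient inclusions,
\begin{align*}
\Gamma^{-1}(x^{k}-x^{k+1}) - L^{\top}u^{k} - \tilde{\nabla}_k &\in \partial g(x^{k+1}),\\
\Sigma^{-1}(u^{k}-u^{k+1}) + L(2x^{k+1}-x^{k}) &\in \partial h^{*}(u^{k+1}),
\end{align*}
where $\tilde{\nabla}_k \coloneqq (\nabla_1 f(x^k[1]),\ldots,\nabla_m f(x^k[m]))$ collects the outdated partial gradients. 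Subtracting the optimality conditions \eqref{eq:primal-dual:RandAsyn} at an arbitrary $z^{\star}=(x^{\star},u^{\star})\in\mathcal{S}$, pairing with $x^{k+1}-x^{\star}$ and $u^{k+1}-u^{\star}$ respectively, and invoking $M_g$-strong monotonicity of $\partial g$ together with monotonicity of $\partial h^{*}$, the quadratic forms should reassemble --- via the three-point identity $2\langle a-b,b-c\rangle_D = \|a-c\|_D^2-\|a-b\|_D^2-\|b-c\|_D^2$ and the definition \eqref{eq:P:RandAsyn} of $P$ --- into a descent inequality of the shape
\begin{equation*}
\|z^{k+1}-z^{\star}\|_{P}^{2} + \|z^{k+1}-z^{k}\|_{P}^{2} + 2\|x^{k+1}-x^{\star}\|_{M_g}^{2} \leq \|z^{k}-z^{\star}\|_{P}^{2} + 2\langle \nabla f(x^{\star})-\tilde{\nabla}_k,\, x^{k+1}-x^{\star}\rangle.
\end{equation*}

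Next I would split the residual inner product as $\nabla f(x^{\star})-\tilde{\nabla}_k = [\nabla f(x^{\star})-\nabla f(x^{k})] + [\nabla f(x^{k})-\tilde{\nabla}_k]$. The first bracket is handled by the descent lemma applied to $f$ together with its convexity, contributing a $\beta\|x^{k+1}-x^{k}\|^2$ term that merges with the primal portion of $\|z^{k+1}-z^{k}\|_P^2$. The second bracket is the genuinely new delay error: the coordinate-wise Lipschitz property \eqref{eq:Lipz:RandAsyn} yields $\|\nabla_i f(x^{k})-\nabla_i f(x^{k}[i])\|\leq \bar{\beta}_i\|x^{k}-x^{k}[i]\|$, and \Cref{ass:delays:RandAsyn} combined with Cauchy--Schwarz gives the telescoping bound $\|x^{k}-x^{k}[i]\|^2\leq B\sum_{t=k-B}^{k-1}\|x^{t+1}-x^{t}\|^2$. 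A weighted Young inequality with weights chosen so that exactly one copy of $\|x^{k+1}-x^{\star}\|_{M_g}^2$ is absorbed into the strong-convexity surplus then bounds the delay error by roughly
\begin{equation*}
\|x^{k+1}-x^{\star}\|_{M_g}^{2} + B\|\bar{\beta}\|_{M_g^{-1}}^{2}\sum_{t=k-B}^{k-1}\|x^{t+1}-x^{t}\|^{2}.
\end{equation*}
Summing the resulting inequality for $k=0,\ldots,N$ and exchanging the order of the double sum multiplies the delay contribution by another factor $B$, producing $B^2\|\bar{\beta}\|_{M_g^{-1}}^{2}\sum_{t}\|x^{t+1}-x^{t}\|^{2}$. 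The stepsize condition \eqref{eq:conv-diag:RandAsyn} is calibrated precisely so that after this exchange the net coefficient of $\|x^{t+1}-x^{t}\|^{2}$ on the good side remains strictly positive; this careful bookkeeping of the $B^2$ factor arising from outdated information is the main technical obstacle, and it is what forces the term $\tfrac{B^2}{2}\|\bar{\beta}\|_{M_g^{-1}}^{2}$ into the stepsize bound.

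Three conclusions then follow simultaneously from the telescoped inequality: $\seq{\|z^k-z^{\star}\|_P^2}$ is bounded; $\sum_k \|x^{k+1}-x^{k}\|^2<\infty$; and therefore the per-step right-hand error in the one-step inequality is itself summable in $k$, which is exactly $P$-quasi-Fej\'er monotonicity of $\seq{z^k}$ with respect to $\mathcal{S}$. The summability yields the asymptotic regularity $\|z^{k+1}-z^{k}\|\to 0$, and by \Cref{ass:delays:RandAsyn} also $\|x^{k}-x^{k}[i]\|\to 0$ for every $i$. Boundedness guarantees cluster points, and passing to the limit along any convergent subsequence in the two subgradient inclusions --- using closedness of $\partial g$ and $\partial h^{*}$ together with continuity of $\nabla f$ --- shows that every cluster point satisfies \eqref{eq:primal-dual:RandAsyn} and hence lies in $\mathcal{S}$. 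Convergence of the entire sequence to a single $z^{\star}\in\mathcal{S}$ is then obtained from the quasi-Fej\'er convergence principle of \cite{Combettes2001QuasiFejerian}, applied in the Hilbert space equipped with $\langle\cdot,\cdot\rangle_P$.
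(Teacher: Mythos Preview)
Your approach is essentially the same as the paper's: both proofs derive a one-step $P$-metric inequality from the proximal/subgradient characterizations together with the strong convexity of $g$, split the gradient residual as $[\nabla f(x^{\star})-\nabla f(x^{k})] + [\nabla f(x^{k})-\tilde{\nabla}_k]$, control the first piece by convexity plus the descent lemma and the second by \eqref{eq:Lipz:RandAsyn} combined with the telescoping delay bound and a weighted Young inequality, then sum over $k$ to obtain summability of $\|z^{k+1}-z^k\|^2$ and conclude $P$-quasi-Fej\'er monotonicity and convergence via \cite{Combettes2001QuasiFejerian}.

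There is, however, a constant mismatch in your Young step that prevents the argument from closing under the stated stepsize condition. You choose the weight so that \emph{one} copy of $\|x^{k+1}-x^{\star}\|_{M_g}^{2}$ is absorbed, which yields a per-step delay coefficient $B\|\bar{\beta}\|_{M_g^{-1}}^{2}$ and hence $B^{2}\|\bar{\beta}\|_{M_g^{-1}}^{2}$ after the double-sum exchange. But \eqref{eq:conv-diag:RandAsyn} carries $\tfrac{B^{2}}{2}\|\bar{\beta}\|_{M_g^{-1}}^{2}$, so with your constant the net coefficient of $\|x^{t+1}-x^{t}\|^{2}$ on the good side is not guaranteed positive. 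The paper absorbs \emph{both} copies of $\|x^{k+1}-x^{\star}\|_{M_g}^{2}$ (equivalently, doubles the Young weight), which halves the per-step delay coefficient to $\tfrac{B}{2}\|\bar{\beta}\|_{M_g^{-1}}^{2}$ and matches \eqref{eq:conv-diag:RandAsyn} exactly. Since the retained $\|x^{k+1}-x^{\star}\|_{M_g}^{2}$ term plays no further role in your sketch, simply spending all of the strong-convexity surplus in the Young inequality fixes the gap.
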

     \begin{proof}
    
   Adding \cref{eq:fgfirst:RandAsyn} and \cref{eq:hfirst:RandAsyn} we obtain 
   \begin{align} & \|x^{k+1}-x^{\star}\|_{2M_{g}+\Gamma^{-1}}^{2}-\|x^{k}-x^{\star}\|_{\Gamma^{-1}}^{2}+\|x^{k}-x^{k+1}\|_{\Gamma^{-1}}^{2}\nonumber\\
    & +\|u^{k+1}-u^{\star}\|_{\Sigma^{-1}}^{2}-\|u^{k}-u^{\star}\|_{\Sigma^{-1}}^{2}+\|u^{k}-u^{k+1}\|_{\Sigma^{-1}}^{2}\nonumber\\
   \leq {}&\hphantom{{}+{}} 2\sum_{i=1}^{m}\langle x_{i}^{\star}-x_{i}^{k+1},\nabla_{i}f(x^{k}[i])-\nabla_{i}f(x^{k})\rangle + \beta \|x^{k}-x^{k+1}\|^2\nonumber \\&{} +2\langle L(2x^{k+1}-x^{k}-x^\star),u^{k+1}-u^{\star}\rangle +2\langle L^{\top}(u^{k}-u^{\star}),x^{\star}-x^{k+1}\rangle.\label{eq:summedup:RandAsyn}
   \end{align}
   The last two inner products can be rearranged as 
   \begin{align*}
     2\langle L(x^{k+1}-x^{k}),u^{k+1}-u^{k}\rangle-2\langle L(x^{k}-x^{\star}),u^{k}-u^{\star}\rangle  
     +2\langle L(x^{k+1}-x^{\star}),u^{k+1}-u^{\star}\rangle. 
   \end{align*}
   Replacing this term and using \Cref{lem:delayineq-1:RandAsyn} (with $\epsilon_1=2$ and $v=x^{k+1}$) in \eqref{eq:summedup:RandAsyn} yields (with $P$ defined in \eqref{eq:P:RandAsyn}):
     \begin{align}
       &\|z^{k+1}-z^{\star}\|_{P}^{2}-\|z^{k}-z^{\star}\|_{P}^{2}+\|z^{k+1}-z^{k}\|_{P}^{2} \leq  \beta\|x^{k}-x^{k+1}\|^{2} +
       \tfrac{B}{2}\|\bar{\beta}\|_{M_{g}^{-1}}^{2}\sw{1}{1}{x}. \label{eq:Fej-diag:RandAsyn}
     \end{align}
   Sum inequality \eqref{eq:Fej-diag:RandAsyn} over $k$ from $0$ to $p>0$ to obtain:
   \begin{align}
       &\|z^{p+1}-z^{\star}\|_{P}^{2}-\|z^{0}-z^{\star}\|_{P}^{2}+\sum_{k=0}^p\|z^{k+1}-z^{k}\|_{P}^{2} \leq  \beta\sum_{k=0}^p\|x^{k}-x^{k+1}\|^{2} +
       \tfrac{B}{2}\|\bar{\beta}\|_{M_{g}^{-1}}^{2}\sum_{k=0}^p\sw{1}{1}{x}.
       \label{eq:100:RandAsyn} %\\&  \nonumber
     \end{align}
     Let us define 
        \begin{equation*}
     \tilde{P}=\begin{pmatrix}
       \Gamma^{-1}-\tfrac{B^{2}}{2}\|\bar{\beta}\|_{M_{g}^{-1}}^{2}-\beta & -L^\top\\ -L & \Sigma^{-1}
     \end{pmatrix}.
   \end{equation*}
   Since $\Sigma$ is positive definite ($\sigma_i>0$), by Schur complement $\tilde{P}$ is positive definite provided that \eqref{eq:conv-diag:RandAsyn} holds (recall that $L$ has a block-diagonal structure). 
   
     Use \eqref{eq:zeroton:RandAsyn} (with $l=d=1$) in \eqref{eq:100:RandAsyn} to derive 
   % \begin{align*}
       $\|z^{p+1}-z^{\star}\|_{P}^{2}+\sum_{k=0}^p\|z^{k+1}-z^{k}\|_{\tilde{P}}^{2}\leq \|z^{0}-z^{\star}\|_{P}^{2}$. %\label{eq:101:RandAsyn}
     % \end{align*} 
   Therefore, by letting $p$ to infinity we obtain $\sum_{k=0}^\infty\|z^{k+1}-z^{k}\|_{\tilde{P}}^{2} <\infty$. 
   Hence, using \eqref{eq:zeroton:RandAsyn} for the right-hand side in \eqref{eq:Fej-diag:RandAsyn} we have
   \begin{equation*}
    \sum_{k=0}^\infty  \left(\tfrac{B}{2}\|\bar{\beta}\|_{M_{g}^{-1}}^{2}\sw{1}{1}{x} +\beta\|x^{k}-x^{k+1}\|^{2}\right)<\infty.
    \end{equation*} 
   In view of \eqref{eq:Fej-diag:RandAsyn} we conclude that $\seq{z^k}$ is $P$-quasi-Fej\'er monotone with respect to $\mathcal{S}$. 
   
   Consequently, the sequence $\seq{x^k,u^k}$ is bounded \cite[Lem. 3.1]{Combettes2001QuasiFejerian}. Let $({x}^c,{u}^c)$ be a cluster point of $\seq{x^k,u^k}$, \ie, $(x^{k_n},u^{k_n})\rightarrow ({x}^c,{u}^c)$. Using \Cref{lem:delays:RandAsyn} also $x^{k_n}[i]\rightarrow {x}^c$. 
   Noting that the proximal and linear maps as well as $\nabla f$ are continuous, for all $i=1,\ldots,m$ we have %\Cref{Alg:PA-VU:RandAsyn}
   \begin{align*}
     x_i^c =& \prox_{\gamma_i g_i} \big( x_i^c -\gamma_i \nabla_i f(x^c) - \gamma_i L_{ii}^\top u_i^c \big)\\
     u_{i}^{c} =&\prox_{\sigma_i h_i^{\star}}\left(u_{i}^{c}+\sigma_i L_{ii}x_i^{c}\right), 
   \end{align*}
   which implies $(x^c,u^c)\in\mathcal{S}$. The convergence of the sequence follows \cite[Thm. 3.8]{Combettes2001QuasiFejerian}. 
     \end{proof}
   In the case of total coupling (when $L$ is not block-diagonal), it is no longer possible to establish quasi-Fej\'er monotonicity of the V\~u-Condat generated sequence in the space equipped with $\langle\cdot,\cdot\rangle_P$. This is because the coupling linear mapping $L$, is operating on outdated vectors. 
   In the next section we propose an AHU-type primal-dual algorithm that is better suited for problems with total coupling. 
\section{The case of total coupling}
\label{sec:generalL:RandAsyn}

   %!TEX root = ../../../main.tex
   
   In this section we consider problem \eqref{eq:main-prob:RandAsyn} with \emph{total coupling}. That is, we assume that the coupling between agents is enforced through the linear maps ($L$ is not block-diagonal), and possibly through the smooth term $f$. 
   
   \subsection{An AHU-type primal-dual algorithm}
   We consider the primal-dual algorithm \eqref{Alg:AHU:RandAsyn}. Compared to \eqref{eq:VuCondat:RandAsyn}, in the dual update the linear map $L_{i \mydot}$ operates on $x^k[i]$ in place of $2x^{k+1}[i]-x^k[i]$. This modification results in the possibility of using larger stepsizes since the terms $2x^{k+1}[i]-x^k[i]$ would introduce additional sources of error.   
   
    Let us define the following two sets of indices:
   \begin{align*}
     \mathcal{M}^{\rm p}_i \coloneqq \{j\mid L_{ji} \neq 0\},\quad \mathcal{M}^{\rm d}_i \coloneqq \{j\mid L_{ij} \neq 0\}, 
   \end{align*}
   where $0$ denotes a zero matrix of appropriate dimensions. 
   In \Cref{Alg:PA-VU:RandAsyn}, due to the additional coupling through the linear maps, the primal
   vector of agent $i$ must be transmitted to all $j\in\mathcal{M}^{\rm p}_i\cup \mathcal{N}^{\rm out}_i$ while the dual vector is to be transmitted to all $j\in\mathcal{M}^{\rm d}_i$. Notice that the outdated primal and dual vectors $x^k[i]$ and $u^k[i]$, need not have the same delay pattern and are arbitrary as long as \Cref{ass:delays:RandAsyn} is satisfied, \ie, agent $i$ may use the primal vector $x_j^{k_1}$ and the dual vector $u_j^{k_2}$ that were the variables of agent $j$ at times $k_1$ and $k_2$.
       %    \begin{algorithm}[H]
       %   \caption{A primal-dual algorithm with bounded delays}
       %   \label{Alg:PA-VU:RandAsyn}
       %   \begin{algorithmic}%[1] 
       %   \algnotext{EndFor}
       %     \item[]\hspace*{-8pt}\textbf{Initialize:} $x_i^0\in\R^{n_i}$, $u_i^0\in\R^{r_i}$ for  $i\in\{1,\ldots,m\}$.
       %      \For{$k=0,1,\ldots$}
       %       \For {each agent $i=1,\ldots,m$} 
   
       %     \INDSTATE -- perform the local updates using the last received\linebreak
       %     \hspace*{15pt}information, \ie, the locally stored vectors $x^k[i]$ and \linebreak
       %     \hspace*{15pt}$u^k[i]$ as defined in \eqref{eq:outdated:RandAsyn}:  
       %    \State  $x_{i}^{k+1} =\prox_{\gamma_i g_{i}}\left(x_{i}^{k}-\gamma_i L_{\mydot i}^{\top}u^{k}[i]-\gamma_i\nabla_{\!i}f(x^{k}[i])\right)$
       %     \State $u_{i}^{k+1} =\prox_{\sigma_i h_i^{\star}}\!\left(u_{i}^{k}+\sigma_i L_{i\mydot}x^{k}[i]\right) $      
       %     \INDSTATE -- send $x_i^{k+1}$ to all $j\in \mathcal{N}_i^{\rm out}\cup \mathcal{M}_i^{\rm p}$, and $u_i^{k+1}$ to all \linebreak
       %     \hspace*{18pt}$j\in\mathcal{M}_i^{\rm d}$ (possibly with different delays) 
       %     \EndFor
       %     \EndFor
       %   \end{algorithmic}
       % \end{algorithm}  
    \begin{algorithm}[H]
         \caption{An AHU-type primal-dual algorithm with bounded delays}
         \label{Alg:PA-VU:RandAsyn}
         \begin{algorithmic}[1]
         \algnotext{EndFor}
           \item[\textbf{Initialize:}] $x_i^0\in\R^{n_i}$, $u_i^0\in\R^{r_i}$ for  $i\in\{1,\ldots,m\}$.
           \item[\bf For] $k=0,1,\ldots$ {\bf do}
             \item[\hspace*{13pt}\bf For] {each agent $i=1,\ldots,m$} {\bf do}
                     \Statex\hspace*{-7pt} {\it\small\% Local updates} 
   
           \Statex \hspace*{11pt} {\small\sl\color{black!80}perform the local updates using the last received  information, \ie, the locally stored \linebreak
           \hspace*{12pt} vectors $x^k[i]$ and $u^k[i]$ as  defined in \eqref{eq:outdated:RandAsyn}}:  
           \State\hspace*{\algorithmicindent}\hspace*{-1pt}$x_{i}^{k+1} =\prox_{\gamma_i g_{i}}\left(x_{i}^{k}-\gamma_i L_{\mydot i}^{\top}u^{k}[i]-\gamma_i\nabla_{\!i}f(x^{k}[i])\right)$
           \State\hspace*{\algorithmicindent}\hspace*{-1pt}$u_{i}^{k+1} =\prox_{\sigma_i h_i^{\star}}\!\left(u_{i}^{k}+\sigma_i L_{i\mydot}x^{k}[i]\right)$           
           \Statex\hspace*{-7pt} {\it\small\% Broadcasting to neighbors}
           \State \hspace*{\algorithmicindent}\hspace*{-1pt}send $x_i^{k+1}$ to all $j\in \mathcal{N}_i^{\rm out}\cup \mathcal{M}_i^{\rm p}$, and $u_i^{k+1}$ to all $j\in\mathcal{M}_i^{\rm d}$\quad{\footnotesize(possibly with different delays)}%
         \end{algorithmic}
       \end{algorithm}  
   The convergence of \Cref{Alg:PA-VU:RandAsyn} can be established under the assumption that the functions $g_i$, $h_i^*$ are strongly convex, and provided that small enough stepsizes are used. 
   It is important to note that the strong convexity assumption for  $g$ and $h^*$ is  required even for the synchronous algorithm (refer to the discussion after \eqref{Alg:AHU:RandAsyn}), and is not a restriction that is imposed because of the delays. Moreover, under this assumption the set of primal-dual solutions is a singleton, $\mathcal{S}=\{z^\star\}$.   We summarize the additional requirements below:
   % In \Cref{tm:main-gen:RandAsyn} convergence is established for \Cref{Alg:PA-VU:RandAsyn} when the stepsizes are small enough, under the assumption that the functions $g_i$ are strongly convex and $h_i$ are continuously differentiable with Lipschitz continuous gradient. 
   % In \Cref{tm:main-gen:RandAsyn} convergence is established for \Cref{Alg:PA-VU:RandAsyn}, when in addition to strong convexity assumption for $g_i$,  
   % the functions $h_i$ are continuously differentiable with Lipschitz continuous gradient.
   \begin{ass} \label{ass:GenL:RandAsyn}
     For all $i=1,\ldots,m$:
   
     \begin{enumerate}
        % \item \label{ass:GenL-1:RandAsyn} (Strong convexity) $g_i$ is $\mu_g^i$-convex for some $\mu_g^i>0$. 
       \item \label{ass:GenL-2:RandAsyn} (Lipschitz continuity)   
         $h_i$ is continuously differentiable, and $\nabla h_i$ is $\tfrac{1}{\mu_h^i}$-Lipschitz continuous for some $\mu_h^i>0$. Equivalently, $h_i^*$ is $\mu_h^i$-strongly convex;
       \item \label{ass:GenL-3:RandAsyn} (stepsize condition) The stepsizes $\sigma_i,\gamma_i>0$ satisfy the following inequalities 
       \begin{equation*}
         \sigma_i< \frac{1}{C_{\textrm{s}}(B+1)^2},\quad \gamma_i< \frac{1}{\beta+\tfrac{1}{2}R_{\textrm{s}}(B+1)^2+B^2\|\bar{\beta}\|_{M_{g}^{-1}}^{2}},  
       \end{equation*}
       where 
       \begin{equation}\label{eq:RsCs:RandAsyn}
          R_{\textrm s} \coloneqq \sum_{i=1}^m\tfrac{1}{\mu_{h}^{i}}\|L_{i\mydot}\|^{2},\quad C_{\textrm s} \coloneqq \sum_{i=1}^m\tfrac{1}{\mu_{g}^{i}}\|L_{\mydot i}^\top\|^{2}.
        \end{equation} 
     \end{enumerate}
   \end{ass} 
   Note that according to \Cref{ass:GenL-3:RandAsyn} a one time global communication of $R_{\textrm s}$, $C_{\textrm s}$, $\beta$ and $\|\bar{\beta}\|_{M_{g}^{-1}}$ is required. Before proceeding with the convergence results, we define the  following positive definite matrices. 
     \begin{align}
        M_h \coloneqq \blkdiag(\mu_h^1I_{r_1},\ldots,\mu_h^mI_{r_m}), 
        \quad M \coloneqq \blkdiag(M_g,M_h). \label{eq:M:RandAsyn} %\label{eq:Mh:RandAsyn}
   \end{align}

   The stepsize condition in \Cref{ass:GenL-3:RandAsyn} is more stringent than the condition in \Cref{sec:BlockL:RandAsyn} for the case of partial coupling. This is due to the fact that $L_{i\mydot}$ and $L_{\mydot i}$ are operating on delayed vectors. 
   In the next lemma two key inequalities are established for \Cref{Alg:PA-VU:RandAsyn} that are crucial for our convergence analysis. The proof of the lemma is similar to that of \Cref{lem:gnhineq:RandAsyn} and is therefore omitted. 
   % First we establish a key result for \Cref{Alg:PA-VU:RandAsyn}. 
   \begin{lem}\label{lem:gnhineq-gen:RandAsyn}
     Suppose that \Cref{ass:delays:RandAsyn}, \ref{ass:optProb:RandAsyn} and \ref{ass:GenL-2:RandAsyn} are satisfied.  Consider the sequence generated by \Cref{Alg:PA-VU:RandAsyn}.  
      Then, for any $(x^\star,u^\star)\in\mathcal{S}$ the following hold:
          \begin{enumerate}
            \item \label{eq:fgfirst-gen:RandAsyn}\(\hphantom{\leq{}}\|x^{k+1}-x^{\star}\|_{2M_{g}+\Gamma^{-1}}^{2} -\|x^{k}-x^{\star}\|_{\Gamma^{-1}}^{2}+\|x^{k}-x^{k+1}\|_{\Gamma^{-1}-\beta I}^{2}\)\\[5pt] \noindent
     \( \leq 
     2\sum_{i=1}^m\langle \nabla_{i}f(x^{k}[i])-\nabla_i f(x^{k})+ L_{\mydot i}^{\top}(u^{k}[i]-u^{\star}),x_{i}^{\star}-x_{i}^{k+1}\rangle
   \);   
     % \begin{align}
     %   &\|x^{k+1}-x^{\star}\|_{2M_{g}+\Gamma^{-1}}^{2} -\|x^{k}-x^{\star}\|_{\Gamma^{-1}}^{2}+\|x^{k}-x^{k+1}\|_{\Gamma^{-1}-\beta I}^{2}\nonumber\\\leq{}&2\sum_{i=1}^m\langle \nabla_{i}f(x^{k}[i])-\nabla_i f(x^{k})+ L_{\mydot i}^{\top}(u^{k}[i]-u^{\star}),x_{i}^{\star}-x_{i}^{k+1}\rangle, \label{eq:fgfirst-gen:RandAsyn}
     % \end{align} 
      \item \label{eq:hfirst-gen:RandAsyn}\(\hphantom{\leq{}}\|u^{k+1}-u^{\star}\|_{2M_{h}+\Sigma^{-1}}^{2}-\|u^{k}-u^{\star}\|_{\Sigma^{-1}}^{2}+\|u^{k}-u^{k+1}\|_{\Sigma^{-1}}^{2} \)\\[5pt] \noindent
     \(\leq{}
     2\sum_{i=1}^m\langle L_{i\mydot}x^{k}[i],u_{i}^{k+1}-u_{i}^{\star}\rangle    +2\langle Lx^{\star},u^{\star}-u^{k+1}\rangle. 
   \)   
   % \begin{align}
   %     &\|u^{k+1}-u^{\star}\|_{2M_{h}+\Sigma^{-1}}^{2}-\|u^{k}-u^{\star}\|_{\Sigma^{-1}}^{2}+\|u^{k}-u^{k+1}\|_{\Sigma^{-1}}^{2}\nonumber\\\leq{}&2\sum_{i=1}^m\langle L_{i\mydot}x^{k}[i],u_{i}^{k+1}-u_{i}^{\star}\rangle    +2\langle Lx^{\star},u^{\star}-u^{k+1}\rangle.\label{eq:hfirst-gen:RandAsyn}
   %   \end{align}
          \end{enumerate}
   \end{lem} 
   It is shown in the next theorem that the sequence generated by \Cref{Alg:PA-VU:RandAsyn} is quasi-Fej\'er monotone in the space equipped with $\langle\cdot,\cdot\rangle_D$ (with $D$ as in \eqref{eq:D:RandAsyn}). %In comparison to \Cref{tm:main-diagonal:RandAsyn}, the presence of coupling through the linear maps results in additional error terms in this case which are controlled by restricting the stepsizes further. We remark that \Cref{Alg:PA-VU:RandAsyn}  

   % We proceed with the convergence results for \Cref{Alg:PA-VU:RandAsyn}. % are summarized in \Cref{tm:main-gen:RandAsyn,tm:main-gen-linear:RandAsyn}. 
   %In this case the presence of the coupling through linear mapping, introduces additional error terms (\textit{cf.} \Cref{lem:main-inequality-Gen}). 
   % It is shown that if \Cref{ass:GenL:RandAsyn} holds the generated sequence is quasi-Fej\'er monotone in the space equipped with $\langle\cdot,\cdot\rangle_D$.   
     \begin{thm}\label{tm:main-gen:RandAsyn}
       Suppose that  \Cref{ass:optProb:RandAsyn,ass:delays:RandAsyn,ass:GenL:RandAsyn} are satisfied. 
   Then, the sequence $\seq{z^k}=\seq{x^k,u^k}$ generated by \Cref{Alg:PA-VU:RandAsyn} is $D$-quasi-Fej\'er monotone with respect to $\mathcal{S}=\{z^\star\}$, and converges to $z^\star$.
     \end{thm}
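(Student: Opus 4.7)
The plan is to establish $D$-quasi-Fej\'er monotonicity through a subdifferential-based energy argument analogous in spirit to the $P$-norm analysis underlying \Cref{tm:main-diagonal:RandAsyn}, but carried out in the diagonal metric $D$ so that the non-block-diagonal mapping $L$ does not destroy positive-definiteness. First I would record the optimality conditions of the two proximal steps in \Cref{Alg:PA-VU:RandAsyn}: the primal step gives, for every agent $i$,
\begin{equation*}
\gamma_i^{-1}(x_i^k - x_i^{k+1}) - L_{\mydot i}^\top u^k[i] - \nabla_i f(x^k[i]) \in \partial g_i(x_i^{k+1}),
\end{equation*}
with an analogous inclusion in $\partial h_i^*(u_i^{k+1})$ for the dual. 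Subtracting the KKT inclusions \eqref{eq:primal-dual:RandAsyn} at $z^\star$, invoking the $\mu_g^i$-strong monotonicity of $\partial g_i$ (\Cref{ass:DiagL-1:RandAsyn}) and the $\mu_h^i$-strong monotonicity of $\partial h_i^*$ (\Cref{ass:GenL-2:RandAsyn}), then applying the three-point identity to $\langle x_i^{k+1}-x_i^\star, x_i^k-x_i^{k+1}\rangle$ and its dual counterpart, yields a master inequality of the form
\begin{align*}
\tfrac12 \|z^k - z^\star\|_D^2 - \tfrac12 \|z^{k+1} - z^\star\|_D^2 \geq{}& \tfrac12 \|z^{k+1}-z^k\|_D^2 \\
&+ \|x^{k+1}-x^\star\|_{M_g}^2 + \|u^{k+1}-u^\star\|_{M_h}^2 \\
&+ \mathcal{C}^k + \mathcal{F}^k,
\end{align*}
where $\mathcal{C}^k$ collects the cross terms involving $L$ acting on delayed arguments, and $\mathcal{F}^k$ the terms arising from $\nabla f$.

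Next I would split each delayed quantity into synchronous plus error, $u^k[i] = u^k + (u^k[i]-u^k)$ and $x^k[i] = x^k + (x^k[i]-x^k)$, separating $\mathcal{C}^k$ and $\mathcal{F}^k$ into a synchronous piece and a delay-error piece. Thanks to the AHU structure (dual argument $x^k$ rather than $2x^{k+1}-x^k$), the synchronous cross-piece collapses to
\begin{equation*}
\langle L(x^{k+1}-x^k), u^k-u^\star\rangle - \langle L(x^k-x^\star), u^{k+1}-u^k\rangle,
\end{equation*}
which Young's inequality bounds in terms of $\|z^{k+1}-z^k\|_D^2$, $\|u^{k+1}-u^\star\|_{M_h}^2$, and $\|x^{k+1}-x^\star\|_{M_g}^2$, with coefficients naturally producing the constants $R_{\mathrm s}$ and $C_{\mathrm s}$ of \eqref{eq:RsCs:RandAsyn}. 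The smooth-gradient piece $\mathcal{F}^k$ is handled by the standard convexity-plus-$\beta$-Lipschitz descent estimate.

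For the delay-error contributions, I would invoke \Cref{ass:delays:RandAsyn}: writing each delayed coordinate as a telescoping sum $u_j^{\tau_j^i(k)}-u_j^k = -\sum_{\ell=\tau_j^i(k)}^{k-1}(u_j^{\ell+1}-u_j^\ell)$ and applying Cauchy-Schwarz gives $\|u^k[i]-u^k\|^2 \leq B \sum_{\ell=(k-B)_+}^{k-1}\|u^{\ell+1}-u^\ell\|^2$, and similarly for the primal. The crucial use of $\tau_i^i(k)=k$ is that the $i$-th block of the argument of $\nabla_i f$ is up-to-date in both $x^k[i]$ and $x^k$, so \eqref{eq:Lipz:RandAsyn} applies with $\bar\beta_i$; reweighting by $(\mu_g^i)^{-1}$ then produces the $\|\bar\beta\|_{M_g^{-1}}$ constant. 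Aggregating, the delay errors are dominated by sums of the form $\sum_{\ell=(k-B)_+}^{k-1} c\,\|z^{\ell+1}-z^\ell\|^2$ with prefactors proportional to $(B+1)^2 R_{\mathrm s}$, $(B+1)^2 C_{\mathrm s}$, and $B^2\|\bar\beta\|_{M_g^{-1}}^2$. The stepsize bound of \Cref{ass:GenL-3:RandAsyn} is exactly what is required so that, after summing the master inequality over $k$ and reshuffling the delayed double sum (\cf \cite[Chap. 7]{Bertsekas1989Parallel}), a strictly positive residual dissipation survives on the right-hand side and the remaining error sequence $\seq{\varepsilon^k}$ is summable; this yields $D$-quasi-Fej\'er monotonicity with respect to $\mathcal{S}=\{z^\star\}$ (whose singleton nature follows from the strong convexities in effect). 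Boundedness of $\seq{z^k}$ and $\|z^{k+1}-z^k\|\to 0$ then follow from standard arguments, and uniqueness of the cluster point via closedness of the inclusions in the limit of \eqref{eq:primal-dual:RandAsyn} identifies it with $z^\star$, giving $z^k\to z^\star$.

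The step I expect to be the main obstacle is the control of $\mathcal{C}^k$ in the $D$-metric. In \Cref{tm:main-diagonal:RandAsyn} the off-diagonal $-L$ entries of the metric $P$ automatically absorb the skew-symmetric coupling when $L$ is block-diagonal; under total coupling no such metric remains positive definite, so the missing absorption must be substituted \emph{entirely} by the strong-convexity dissipations $\|x^{k+1}-x^\star\|_{M_g}^2$ and $\|u^{k+1}-u^\star\|_{M_h}^2$, and carefully matched against the $(B+1)^2$-type factors produced by the Young's inequalities that tame the delay errors. This constant-chasing is what dictates the precise form of \Cref{ass:GenL-3:RandAsyn} and is the real technical crux of the proof.
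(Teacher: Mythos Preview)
Your strategy is correct and matches the paper's proof in all essential respects: the proximal/strong-convexity inequalities, the three-point identity in the $D$-metric, the telescoping bound on delays, and the summation argument yielding summable residuals under \Cref{ass:GenL-3:RandAsyn}.

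The one place where the paper proceeds differently is your treatment of $\mathcal{C}^k$. Rather than splitting $u^k[i]=u^k+(u^k[i]-u^k)$ and confronting a separate ``synchronous cross-piece,'' the paper exploits an exact cancellation: after adding the primal and dual energy inequalities and using \eqref{eq:TheObvs:RandAsyn}, the coupling terms regroup \emph{identically} into
\[
2\sum_{i}\bigl\langle L_{\mydot i}^{\top}(u^{k}[i]-u^{k+1}),\,x_{i}^{\star}-x_{i}^{k+1}\bigr\rangle
+2\sum_{i}\bigl\langle L_{i\mydot}(x^{k}[i]-x^{k+1}),\,u_{i}^{k+1}-u_{i}^{\star}\bigr\rangle,
\]
with no leftover synchronous piece whatsoever. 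These are bounded in one stroke by \Cref{lem:delayineq:RandAsyn} with $q=1$ (telescoping $u^{k}[i]-u^{k+1}$ and $x^{k}[i]-x^{k+1}$ over $B{+}1$ steps), which is what produces the $(B{+}1)^2$ constants in \Cref{ass:GenL-3:RandAsyn} directly. Your route is viable, but note that your ``collapsed'' synchronous piece carries $u^k-u^\star$ and $x^k-x^\star$, whereas the strong-convexity budget you have available is $\|u^{k+1}-u^\star\|_{M_h}^2$ and $\|x^{k+1}-x^\star\|_{M_g}^2$; closing this index mismatch requires a further decomposition (e.g.\ $u^k-u^\star=(u^k-u^{k+1})+(u^{k+1}-u^\star)$) that you gloss over. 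The paper's algebraic identity sidesteps this entirely and yields the stated stepsize condition without additional bookkeeping.
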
  
     \begin{proof}
       Add \cref{eq:fgfirst-gen:RandAsyn} and \cref{eq:hfirst-gen:RandAsyn}, and rearrange the inner products using \eqref{eq:TheObvs:RandAsyn} to derive (with $D$ defined in \eqref{eq:D:RandAsyn})
   % \begin{align} & \|z^{k+1}-z^{\star}\|_{D}^{2}-\|z^{k}-z^{\star}\|_{D}^{2}+\|z^{k}-z^{k+1}\|_{D}^{2}\nonumber\\
   %  & -\beta\|x^{k}-x^{k+1}\|^{2}+\|x^{k+1}-x^{\star}\|_{2M_{g}}^{2}+\|u^{k+1}-u^{\star}\|_{2M_{h}}^{2}\nonumber\\
   % \leq{} & \hphantom{{}+{}}2\sum_{i=1}^{m}\langle \nabla_{i}f(x^{k}[i])-\nabla_{i}f(x^{k}) + L_{\mydot i}^{\top}(u^{k}[i]-u^{k+1}),x_{i}^{\star}-x_{i}^{k+1}\rangle \nonumber\\ & + 2\sum_{i=1}^{m}\langle L_{i\mydot}(x^{k}[i]-x^{k+1}),u_{i}^{k+1}-u_{i}^{\star}\rangle
   % \label{eq:uglyineq:RandAsyn}
   % \end{align}
   \begin{align} & \|z^{k+1}-z^{\star}\|_{D}^{2}-\|z^{k}-z^{\star}\|_{D}^{2}+\|z^{k}-z^{k+1}\|_{D}^{2}\nonumber\\
    % & +\nonumber\\
   \leq{} & \hphantom{{}+{}}2\sum_{i=1}^{m}\langle \nabla_{i}f(x^{k}[i])-\nabla_{i}f(x^{k}) + L_{\mydot i}^{\top}(u^{k}[i]-u^{k+1}),x_{i}^{\star}-x_{i}^{k+1}\rangle -\|u^{k+1}-u^{\star}\|_{2M_{h}}^{2} \nonumber\\ & + 2\sum_{i=1}^{m}\langle L_{i\mydot}(x^{k}[i]-x^{k+1}),u_{i}^{k+1}-u_{i}^{\star}\rangle + \beta\|x^{k}-x^{k+1}\|^{2} - \|x^{k+1}-x^{\star}\|_{2M_{g}}^{2}
   \label{eq:uglyineq:RandAsyn}
   \end{align}
   
   Using the inequalities in \Cref{lem:delayineq:RandAsyn} with $q=1$, $\epsilon_1=\epsilon_2=1$, $\epsilon_3=2$, $v=x^{k+1}$ and $y=u^{k+1}$:
    \begin{align} 
       \|z^{k+1}-z^{\star}\|_{D}^{2}-\|z^{k}-z^{\star}\|_{D}^{2}+\|z^{k}-z^{k+1}\|_{D}^{2} 
   \leq{}&{} B\|\bar{\beta}\|_{M_{g}^{-1}}^{2}\sw{1}{1}{x} +\tfrac{1}{2}R_{\textrm{s}}(B+1)\sw{1}{0}{x} \nonumber\\
    & {}+C_{\textrm{s}}(B+1)\sw{1}{0}{u}+\beta\|x^{k}-x^{k+1}\|^{2}. \label{eq:lemmain:RandAsyn} 
   \end{align}
    Sum over $k$ from $0$ to $p>0$ to obtain
   {\mathtight\begin{align}  \|z^{p+1}-z^{\star}\|_{D}^{2}-\|z^{0}-z^{\star}\|_{D}^{2}+\sum_{k=0}^{p}\|z^{k}-z^{k+1}\|_{D}^{2}
   \leq {}
    & B\|\bar{\beta}\|_{M_{g}^{-1}}^{2}\sum_{k=0}^{p}S_{1}^{1}(x^{t})_{t\leq k}+\tfrac{R_{\textrm{s}}(B+1)}{2}\sum_{k=0}^{p}S_{1}^{0}(x^{t})_{t\leq k}\nonumber\\ & + \beta\sum_{k=0}^{p}\|x^{k}-x^{k+1}\|^{2}+{C_{\textrm{s}}(B+1)}\sum_{k=0}^{p}S_{1}^{0}(u^{t})_{t\leq k}.
    \label{eq:23:RandAsyn} 
   \end{align}}
   By repeated use of \eqref{eq:zeroton:RandAsyn} in \eqref{eq:23:RandAsyn} we obtain
   {\mathtight\begin{align*} \|z^{p+1}-z^{\star}\|_{D}^{2}-\|z^{0}-z^{\star}\|_{D}^{2}+\sum_{k=0}^{p}\|z^{k}-z^{k+1}\|_{D}^{2}
   \leq {}&{}  \left(B^{2}\|\bar{\beta}\|_{M_{g}^{-1}}^{2}+\tfrac{1}{2}R_{\textrm{s}}(B+1)^2+\beta\right)\sum_{k=0}^{p}\|x^{k}-x^{k+1}\|^{2} \\& + C_{\textrm{s}}(B+1)^{2}\sum_{k=0}^{p}\|u^{k}-u^{k+1}\|^{2}.
   \end{align*}}
   If the stepsizes are small enough to satisfy \Cref{ass:GenL-3:RandAsyn}, letting  $p$ to infinity yields  
   % \begin{equation*}%\label{eq:summability:RandAsyn}
     $\sum_{k=0}^{\infty}\|z^{k+1}-z^{k}\|^{2}<\infty$. 
   % \end{equation*}
   Therefore, it follows from \eqref{eq:lemmain:RandAsyn} (using \eqref{eq:zeroton:RandAsyn}) that $\seq{z^k}$ is $D$-quasi-Fej\'er monotone with respect to $\mathcal{S}$. Arguing as in \Cref{tm:main-diagonal:RandAsyn} completes the proof.   
     \end{proof}
   The next theorem provides a sufficient condition for the stepsizes under which linear convergence is attained.  
   % {\color{red}
   % For some positive $c$, by setting $\gamma_i=\tfrac{c}{\mu_g^i}$ and $\sigma_i=\tfrac{c}{\mu_h^i}$ we can show the following contractiveness: 
   % \begin{align*} & (1+c)\|z^{k+1}-z^{\star}\|_{D}^{2}-\|z^{k}-z^{\star}\|_{D}^{2}+\|z^{k}-z^{k+1}\|_{D}^{2}\\
   % \leq & \left(2B\|\bar{\beta}\|_{M_{g}^{-1}}^{2}+R_{\textrm{s}}(B+1)\right)S_{1}^{0}(x^{t})_{t\leq k}\\&+2C_{\textrm{s}}(B+1)S_{1}^{0}(u^{t})_{t\leq k}+\beta\|x^{k}-x^{k+1}\|^{2}
   % \end{align*}
   % However from here, we get the ugly conditions that $c$ mut satisfy
   % $$
   % c\leq(1+c_{2})^{\tfrac{1}{B+1}}-1,
   % $$
   % where 
   % $$c_{2}=\min\set{\frac{\mu_{g}^{\min}}{2B\|\bar{\beta}\|_{M_{g}^{-1}}^{2}+R_{\textrm{s}}(B+1)+\beta},\frac{\mu_{h}^{\min}}{C_{\textrm{s}}(B+1)}} .$$
   % upon which condition we get the linear convergence: 
   %  % \begin{equation}
   % \begin{equation}\label{eq:linconrate:RandAsyn}
   %      \|z^k-z^\star\|^2 \leq  \big(\tfrac{1}{1+c}\big)^k\|z^0-z^\star\|^2. 
   %     \end{equation}   
   % }
     \begin{thm}[linear convergence]\label{tm:main-gen-linear:RandAsyn}
       Suppose that \Cref{ass:delays:RandAsyn}, \ref{ass:optProb:RandAsyn} and \ref{ass:GenL-2:RandAsyn} are satisfied. Consider the sequence $\seq{z^k}$ generated by \Cref{Alg:PA-VU:RandAsyn}.  
       Let $c$ be a positive scalar and set $\gamma_i=\tfrac{c}{\mu_g^i}, \sigma_i=\tfrac{c}{\mu_h^i}$ for $i=1,\ldots,m$. Let $\mu_g^{\min}=\min\{\mu_g^1,\ldots,\mu_g^m\}$, $\mu_h^{\min}=\min\{\mu_h^1,\ldots,\mu_h^m\}$. Then, the following linear convergence rate holds
       \begin{equation*}%\label{eq:linconrate:RandAsyn}
        \|z^k-z^\star\|^2_D \leq  \big(\tfrac{1}{1+c}\big)^k\|z^0-z^\star\|^2_D,
       \end{equation*}
       provided that $c\leq(1+c_{2})^{\tfrac{1}{B+1}}-1$ where 
        % Suppose that the following holds: 
   % \begin{equation*}%\label{eq:cCond:RandAsyn}
   %   % c\leq\frac{\left(1+2(1-\epsilon_2) c_2 \right)^{\frac{1}{B+1}}-1}{2-2\epsilon_2},
   %   c\leq(1+c_{2})^{\tfrac{1}{B+1}}-1,
   % \end{equation*}
   $c_{2}=\min\set{\frac{\mu_{g}^{\min}}{2B\|\bar{\beta}\|_{M_{g}^{-1}}^{2}+R_{\textrm{s}}(B+1)+\beta},\frac{\mu_{h}^{\min}}{2C_{\textrm{s}}(B+1)}}$.   
   % \begin{equation*}%\label{eq:cCond:RandAsyn}
   %   % c\leq\frac{\left(1+2(1-\epsilon_2) c_2 \right)^{\frac{1}{B+1}}-1}{2-2\epsilon_2},
   %   c\leq(1+c_{2})^{\tfrac{1}{B+1}}-1,
   % \end{equation*}
   % where 
   % $$c_{2}=\min\set{\frac{\mu_{g}^{\min}}{2B\|\bar{\beta}\|_{M_{g}^{-1}}^{2}+R_{\textrm{s}}(B+1)+\beta},\frac{\mu_{h}^{\min}}{2C_{\textrm{s}}(B+1)}} .$$
   %     Then the following linear convergence rate holds
   %     \begin{equation*}%\label{eq:linconrate:RandAsyn}
   %      \|z^k-z^\star\|^2_D \leq  \big(\tfrac{1}{1+c}\big)^k\|z^0-z^\star\|^2_D. 
   %     \end{equation*}
         \end{thm}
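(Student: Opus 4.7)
The plan is to leverage the quasi-Fej\'er recursion that underlies \Cref{tm:main-gen:RandAsyn} and promote it to a geometric contraction by exploiting strong convexity of both $g_i$ and $h_i^*$ together with a carefully chosen scaling of the stepsizes. The prescription $\gamma_i=c/\mu_g^i$ and $\sigma_i=c/\mu_h^i$ is tailored precisely so that the strong convexity modulus of each proximal step translates into a uniform contraction factor $\tfrac{c}{1+c}$ on the local $D$-norm, independently of $i$. This uniformity is what makes the rate $(1+c)^{-1}$ appear on the full stacked variable $z^k=(x^k,u^k)$.

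First I would revisit the proof of \Cref{tm:main-gen:RandAsyn} and extract the one-step recursion, which, because of \Cref{ass:DiagL-1:RandAsyn} and \Cref{ass:GenL-2:RandAsyn}, refines to an inequality of the shape
\begin{equation*}
\|z^{k+1}-z^\star\|_D^2 \;\leq\; \|z^k-z^\star\|_D^2 \;-\; \tfrac{c}{1+c}\,\|z^{k+1}-z^\star\|_D^2 \;+\; E_k,
\end{equation*}
where $E_k$ gathers the delay-induced error terms. These are bounded above by $c^2$ times quantities of the form $\sum_{j=0}^{B}\|z^{k-j+1}-z^{k-j}\|^2$, the relevant constants being exactly $\beta$, $\|\bar\beta\|_{M_g^{-1}}^2$, $R_{\textrm s}$, and $C_{\textrm s}$ as in \Cref{ass:GenL-3:RandAsyn}. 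Rearranging gives $(1+\tfrac{c}{1+c})\|z^{k+1}-z^\star\|_D^2\leq \|z^k-z^\star\|_D^2+E_k$, hence
\begin{equation*}
\|z^{k+1}-z^\star\|_D^2 \;\leq\; \tfrac{1}{1+c}\bigl(\|z^k-z^\star\|_D^2 + (1+c)E_k\bigr).
\end{equation*}

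Next I would proceed by strong induction: assume $\|z^{j}-z^\star\|_D^2\leq(1+c)^{-j}\|z^0-z^\star\|_D^2$ for $j=0,\ldots,k$. Each increment $\|z^{k-j+1}-z^{k-j}\|^2$ is controlled, up to a constant factor, by $\|z^{k-j+1}-z^\star\|_D^2+\|z^{k-j}-z^\star\|_D^2$ and thus by $(1+c)^{-(k-B)}\|z^0-z^\star\|_D^2=(1+c)^{B}(1+c)^{-k}\|z^0-z^\star\|_D^2$. Substituting into $E_k$ and collecting the worst-case coefficients $2B\|\bar\beta\|_{M_g^{-1}}^2+R_{\textrm s}(B+1)+\beta$ on the primal side and $2C_{\textrm s}(B+1)$ on the dual side, the induction step closes provided
\begin{equation*}
(1+c)\bigl[(1+c)^{B}-1\bigr]\;\leq\; c_2,
\end{equation*}
equivalently $(1+c)^{B+1}\leq 1+c_2$, which is the stated stepsize condition.

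The main obstacle is the bookkeeping of $E_k$: one has to pair the strong convexity surplus $\tfrac{c}{1+c}\|z^{k+1}-z^\star\|_D^2$ against cross terms involving $L$, $L^\top$, $\nabla f$, and the delayed vectors $x^k[i],u^k[i]$, and show that the constants emerging from Young's inequality agree precisely with $R_{\textrm s}$, $C_{\textrm s}$, $\beta$, $\|\bar\beta\|_{M_g^{-1}}^2$ appearing in $c_2$. Once the recursion is in the clean form above, the induction is routine, and the telescoping bound $(1+c)^{B+1}\le1+c_2$ is what is needed to absorb the accumulated delay errors uniformly in $k$, yielding the desired geometric decay.
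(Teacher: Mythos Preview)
Your overall strategy---get a one-step contraction-plus-error inequality, then close by induction---is sound in spirit, but the way you propose to control the error term $E_k$ is where the argument breaks, and it is precisely the place where the paper's proof does something different and essential.

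First, two smaller issues. The factor you write as $\tfrac{c}{1+c}$ is off: with $\gamma_i=c/\mu_g^i$ and $\sigma_i=c/\mu_h^i$ one has $M=cD$, so the strong-convexity surplus is $\|z^{k+1}-z^\star\|_M^2=c\|z^{k+1}-z^\star\|_D^2$ and the recursion is exactly $(1+c)\|z^{k+1}-z^\star\|_D^2\le\|z^k-z^\star\|_D^2+E_k-\|z^{k+1}-z^k\|_D^2$, not your $(1+\tfrac{c}{1+c})$ version. Second, your ``equivalently'' at the end is incorrect: $(1+c)\bigl[(1+c)^B-1\bigr]\le c_2$ rearranges to $(1+c)^{B+1}\le 1+c+c_2$, not $(1+c)^{B+1}\le 1+c_2$.

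The substantive gap is the treatment of $E_k$. You propose to bound each increment $\|z^{\tau+1}-z^\tau\|^2$ by $\|z^{\tau+1}-z^\star\|_D^2+\|z^{\tau}-z^\star\|_D^2$ via the triangle inequality and then invoke the induction hypothesis. This introduces at least a factor $2$ and a norm-equivalence constant between $\|\cdot\|$ and $\|\cdot\|_D$, so you will not recover the stated $c_2$; you also assert that $E_k$ carries a prefactor $c^2$, but in the actual recursion the coefficients multiplying $\sum_\tau\|x^{\tau+1}-x^\tau\|^2$ and $\sum_\tau\|u^{\tau+1}-u^\tau\|^2$ are $2B\|\bar\beta\|_{M_g^{-1}}^2+R_{\textrm s}(B+1)+\beta$ and $2C_{\textrm s}(B+1)$, independent of $c$. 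The paper does \emph{not} bound increments by distances to $z^\star$. Instead it keeps the negative term $-\|z^{k+1}-z^k\|_D^2$ in the recursion, multiplies the whole inequality by $(1+c)^k$, and telescopes over $k$ (the technique of \cite[Lem.~1]{Aytekin2016Analysis}). After summing, each $\|x^{\tau+1}-x^\tau\|^2$ appears on the error side with total weight $(1+c)^\tau\tfrac{(1+c)^{B+1}-1}{c}$ and on the negative side with weight at least $(1+c)^\tau\tfrac{\mu_g^{\min}}{c}$ (since $\Gamma^{-1}\succeq\tfrac{\mu_g^{\min}}{c}I$); balancing the two yields precisely $(1+c)^{B+1}\le 1+c_2$. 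The same computation with $\mu_h^{\min}$ and $2C_{\textrm s}(B+1)$ handles the dual increments. So the mechanism that produces the exact constant $c_2$ is the cancellation of increments against the retained $-\|z^{k+1}-z^k\|_D^2$, not an induction through $z^\star$.
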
    
         \begin{proof}
           In \Cref{tm:main-gen:RandAsyn} the strong convexity assumption was leveraged to counteract the error terms. In order to prove linear convergence we retain some of the strong convexity terms. 
   % This is done by using the inequalities in \Cref{lem:delayineq:RandAsyn} 
   Using the inequalities of \Cref{lem:delayineq:RandAsyn} with $q=1$, $\epsilon_1=\epsilon_2=0.5$, $\epsilon_3=1$, $v=x^{k+1}$ and $y=u^{k+1}$  in \eqref{eq:uglyineq:RandAsyn} yields 
   % \begin{align}
   %   &\|z^{k+1}-z^{\star}\|_{D}^{2}-\|z^{k}-z^{\star}\|_{D}^{2}+\|z^{k}-z^{k+1}\|_{D}^{2}\nonumber\\
   %  & +\|x^{k+1}-x^{\star}\|_{M_{g}}^{2}+\|u^{k+1}-u^{\star}\|_{M_{h}}^{2}\nonumber\\
   % \leq {}& \left(2B\|\bar{\beta}\|_{M_{g}^{-1}}^{2}+R_{\textrm{s}}(B+1)\right)S_{1}^{0}(x^{t})_{t\leq k}\nonumber \\&+2C_{\textrm{s}}(B+1)S_{1}^{0}(u^{t})_{t\leq k}+\beta\|x^{k}-x^{k+1}\|^{2}.\label{eq:ugly2:RandAsyn}
   % \end{align}
   \begin{align}
     &\|z^{k+1}-z^{\star}\|_{D}^{2}-\|z^{k}-z^{\star}\|_{D}^{2}+\|z^{k}-z^{k+1}\|_{D}^{2} +\|x^{k+1}-x^{\star}\|_{M_{g}}^{2}+\|u^{k+1}-u^{\star}\|_{M_{h}}^{2}\nonumber\\
   \leq {} &  \left(2B\|\bar{\beta}\|_{M_{g}^{-1}}^{2}+R_{\textrm{s}}(B+1)\right)S_{1}^{0}(x^{t})_{t\leq k}+2C_{\textrm{s}}(B+1)S_{1}^{0}(u^{t})_{t\leq k}+\beta\|x^{k}-x^{k+1}\|^{2}.\label{eq:ugly2:RandAsyn}
   \end{align}
   Note that one may set these constants differently and obtain a different valid bound on the stepsizes.   
   
   Since we set $\gamma_i=\tfrac{c}{\mu_g^i}$, $\sigma_i=\tfrac{c}{\mu_h^i}$, we have 
   $
   D= \blkdiag(\Gamma^{-1},\Sigma^{-1})= \tfrac{1}{c}\blkdiag(M_g,M_h),
   $
   which together with \eqref{eq:ugly2:RandAsyn} yields 
    \begin{align}
    (1+c)\|z^{k+1}-z^{\star}\|_{D}^{2} - \|z^{k}-z^{\star}\|_{D}^{2} 
    \leq {}& \left(2B\|\bar{\beta}\|_{M_{g}^{-1}}^{2}+ R_{\textrm{s}}(B+1)+\beta\right)\sw{1}{0}{x}\nonumber\\&+2C_{\textrm{s}}(B+1)\sw{1}{0}{u}-\|z^{k+1}-z^{k}\|^{2}_D,\label{eq:linin:RandAsyn}
   \end{align}
   where we used the conservative bound $\beta\|x^{k+1}-x^k\|^2\leq\beta\sw{1}{0}{x}$ in order to avoid algebraic difficulties.
   The result follows by multiplying \eqref{eq:linin:RandAsyn} by $(1+c)^k$ and summing over $k$ from $0$ to $p$, see \cite[Lem. 1]{Aytekin2016Analysis}.
         \end{proof}

\subsection{A randomized variant}
\label{sec:rand:RandAsyn}

   %!TEX root = ../../../main.tex
   
   In this subsection we propose a randomized variant of \Cref{Alg:PA-VU:RandAsyn} where agents are activated randomly according to independent probabilities, \ie, at every iteration several agents may be active. Unlike the partially asynchronous protocol \cite{Bertsekas1989Parallel}, in this scheme 
    the agents are not required to perform at least one update in any interval of length $B$. 
     \begin{algorithm}
         \caption{A randomized variant of \protect\Cref{Alg:PA-VU:RandAsyn}}
         \label{Alg:PA-VU-Rand:RandAsyn}
         \begin{algorithmic}[1]
         \algnotext{EndFor}
           \item[\textbf{Initialize:}] $x_i^0\in\R^{n_i}$, $u_i^0\in\R^{r_i}$ for  $i\in\{1,\ldots,m\}$.
           \item[\bf For] $k=0,1,\ldots$ {\bf do}
           \Statex \hspace*{-10pt} each agent $i=1,\ldots,m$ is activated independantly with  probability $p_i>0$.
             \item[\hspace*{13pt}\bf For] {active agents} {\bf do}
                     \Statex\hspace*{-7pt} {\it\small\% Local updates}
   
           \Statex \hspace*{11pt} {\small\sl\color{black!80}perform the local updates using the last received  information, \ie, the locally stored\linebreak
           \hspace*{12pt} vectors $x^k[i]$ and $u^k[i]$ as  defined in \eqref{eq:outdated:RandAsyn}}:  
           \State\hspace*{\algorithmicindent}\hspace*{-1pt}$x_{i}^{k+1} =\prox_{\gamma_i g_{i}}\left(x_{i}^{k}-\gamma_i L_{\mydot i}^{\top}u^{k}[i]-\gamma_i\nabla_{\!i}f(x^{k}[i])\right)$
           \State\hspace*{\algorithmicindent}\hspace*{-1pt}$u_{i}^{k+1} =\prox_{\sigma_i h_i^{\star}}\!\left(u_{i}^{k}+\sigma_i L_{i\mydot}x^{k}[i]\right)$           
           \Statex\hspace*{-7pt} {\it\small\% Broadcasting to neighbors}
           \State \hspace*{\algorithmicindent}\hspace*{-1pt}send $x_i^{k+1}$ to all $j\in \mathcal{N}_i^{\rm out}\cup \mathcal{M}_i^{\rm p}$, and $u_i^{k+1}$ to all $j\in\mathcal{M}_i^{\rm d}$\quad{\footnotesize(possibly with different delays)}%
         \end{algorithmic}
       \end{algorithm}  
   In the randomized setting of \Cref{Alg:PA-VU-Rand:RandAsyn}, the stepsize condition in \Cref{ass:GenL-3:RandAsyn} is replaced by the following stepsize condition. 
   \begin{ass}(stepsize condition) \label{ass:GenL-Rand:RandAsyn}
     For all $i=1,\ldots,m$, 
       % \item \label{ass:GenL-Rand-3:RandAsyn} 
       independent probabilities $p_i>0$ %are strictly positive 
       and stepsizes $\sigma_i,\gamma_i>0$ satisfy the following inequalities 
    \begin{equation*}
     \sigma_{i}<\frac{1}{2C_{s}\left(B^{2}p_{i}+1\right)},\quad \gamma_{i}<\frac{1}{\beta+R_{s}\left(B^{2}p_{i}+1\right)+\|\bar{\beta}\|_{M_{g}^{-1}}^{2}B^{2}p_{i}}.
       \end{equation*}
   \end{ass} 
   Notice that compared to the non-randomized version, according to \Cref{ass:GenL-Rand:RandAsyn}, an agent is allowed to take larger steps if its probability of activation is smaller.  
   \begin{thm}\label{thm:rand:RandAsyn}
   	Suppose that \Cref{ass:delays:RandAsyn}, \ref{ass:optProb:RandAsyn}, \ref{ass:GenL-2:RandAsyn}
   and \ref{ass:GenL-Rand:RandAsyn} are satisfied. Then, the sequence $\seq{z^k}=\seq{x^k,u^k}$ generated by \Cref{Alg:PA-VU-Rand:RandAsyn} converges almost surely to $z^\star$. 
   \end{thm}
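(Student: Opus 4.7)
The plan is to extend the $D$-quasi-Fej\'er analysis of \Cref{tm:main-gen:RandAsyn} to a stochastic setting and then invoke the Robbins-Siegmund supermartingale theorem. To this end I would first introduce the independent Bernoulli activation variables $\chi_i^k$ with $\mathbb{P}(\chi_i^k=1)=p_i$, together with the natural filtration $\mathcal{F}^k=\sigma(\chi_i^t\,:\,t<k,\ i=1,\ldots,m)$. \Cref{Alg:PA-VU-Rand:RandAsyn} is then conveniently rewritten as
\begin{equation*}
z_i^{k+1} = z_i^k + \chi_i^k\bigl(\tilde{z}_i^{k+1} - z_i^k\bigr),
\end{equation*}
where $\tilde{z}_i^{k+1}$ denotes the ``virtual'' update that \Cref{Alg:PA-VU:RandAsyn} would produce from the locally stored $x^k[i], u^k[i]$; note that $\tilde{z}_i^{k+1}$ is $\mathcal{F}^k$-measurable since the delays in \eqref{eq:outdated:RandAsyn} only look into the past.

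Next I would introduce the rescaled positive definite matrix $\tilde D$ obtained from $D$ by dividing the $i$-th block by $p_i$. Exploiting $(\chi_i^k)^2=\chi_i^k$ and $\mathbb{E}[\chi_i^k\mid\mathcal{F}^k]=p_i$, an elementary calculation gives the identity
\begin{equation*}
\mathbb{E}\bigl[\|z^{k+1}-z^\star\|_{\tilde D}^2\,\big|\,\mathcal{F}^k\bigr]
= \|z^k-z^\star\|_{\tilde D}^2 - \|z^k-z^\star\|_D^2 + \|\tilde z^{k+1}-z^\star\|_D^2.
\end{equation*}
The term $\|\tilde z^{k+1}-z^\star\|_D^2$ can then be handled exactly by the deterministic mechanism used to prove \Cref{tm:main-gen:RandAsyn}: strong convexity of the $g_i$ and $h_i^*$ produces a nonnegative decrease $Y^k$ controlling $\|z^k-z^\star\|^2$, while the delayed evaluations of $\nabla f$ and $L$ on $z^k[i]$ instead of $z^k$ give rise to error contributions of the form $\sum_{t=k-B}^{k-1}\|z^{t+1}-z^t\|_D^{2}$.

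The decisive observation is that the random increments satisfy $\|z_i^{t+1}-z_i^t\|^2=\chi_i^t\,\|\tilde z_i^{t+1}-z_i^t\|^2$, so after passing to conditional expectation each delay term is scaled by $p_i$. This is exactly the mechanism producing the $B^2 p_i$ factors in \Cref{ass:GenL-Rand:RandAsyn}, and it allows me to absorb the backward-looking delay terms into a shifted Lyapunov functional
\begin{equation*}
V^k \coloneqq \|z^k-z^\star\|_{\tilde D}^2 + \sum_{\ell=1}^{B} w_\ell\,\|z^{k-\ell+1}-z^{k-\ell}\|^2,
\end{equation*}
for an appropriate nonnegative weight sequence $(w_\ell)$. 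Choosing the stepsizes as in \Cref{ass:GenL-Rand:RandAsyn}, one obtains a stochastic quasi-Fej\'er inequality
\begin{equation*}
\mathbb{E}[V^{k+1}\mid\mathcal{F}^k]\ \leq\ V^k - Y^k,
\end{equation*}
with $Y^k\geq 0$ and $Y^k\geq \alpha\|z^k-z^\star\|^2$ for some $\alpha>0$ inherited from the strong convexity moduli $\mu_g^i,\mu_h^i$.

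Applying the Robbins-Siegmund theorem then yields that $V^k$ converges almost surely and $\sum_{k}Y^k<\infty$ almost surely; the latter forces $\|z^k-z^\star\|\to 0$ almost surely, which is the desired conclusion. The main obstacle I anticipate is the bookkeeping for the Lyapunov weights $w_\ell$: one must carry out the telescoping argument for the delay terms simultaneously with taking conditional expectations, making sure that the random $p_i$-rescalings from the increments match precisely the coefficients appearing in the deterministic bound; this is where \Cref{ass:GenL-Rand:RandAsyn} is exactly tight, and any looser bookkeeping would fail to produce a valid supermartingale.
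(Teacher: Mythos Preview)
Your architecture is essentially that of the paper: the rescaled metric $\tilde D=\Pi^{-1}D$, the conditional-expectation identity
\(
\mathbb{E}\bigl[\|z^{k+1}-z^\star\|_{\tilde D}^2\mid\mathcal{F}^k\bigr]
= \|z^k-z^\star\|_{\tilde D}^2 - \|z^k-z^\star\|_D^2 + \|\tilde z^{k+1}-z^\star\|_D^2,
\)
the deterministic bound on $\|\tilde z^{k+1}-z^\star\|_D^2$ coming from \Cref{lem:gnhineq-gen:RandAsyn}, a Lyapunov functional augmented with backward increments (the paper uses the triangular weights $w_\ell=B-\ell+1$ separately for $x$ and $u$), and Robbins--Siegmund. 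So far so good.

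The gap is in your endgame. You assert that the supermartingale inequality can be arranged with $Y^k\ge\alpha\|z^k-z^\star\|^2$, and then conclude directly from $\sum_k Y^k<\infty$. Under \Cref{ass:GenL-Rand:RandAsyn} this does not come out of the analysis. The constants in that assumption are exactly the ones obtained when the strong-convexity contributions $\|\tilde x^{k+1}-x^\star\|_{2M_g}^2$ and $\|\tilde u^{k+1}-u^\star\|_{2M_h}^2$ are \emph{entirely} spent (via \Cref{lem:delayineq:RandAsyn} with $\epsilon_1=1,\ \epsilon_2=\tfrac12,\ \epsilon_3=1$) to dominate the delay-error terms; what survives on the negative side is $\bar c\,\|z^k-\tilde z^{k+1}\|^2$, the virtual step, not a distance to $z^\star$. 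Even if you tried to retain a sliver $\alpha M$ by exploiting the strict inequalities, the retained term would be $\alpha\|\tilde z^{k+1}-z^\star\|_M^2$, not $\alpha\|z^k-z^\star\|^2$, and passing from $\tilde z^{k+1}\to z^\star$ to $z^k\to z^\star$ still requires an extra argument.

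The paper's closing argument is different from yours: from Robbins--Siegmund one gets $\sum_k\|z^k-\tilde z^{k+1}\|^2<\infty$ a.s., hence $\|z^{k+1}-z^k\|\to0$ and, by \Cref{lem:delays:RandAsyn}, $\|z^k-z^k[i]\|\to0$ a.s.; together with a.s.\ convergence of $V^k$ this gives boundedness, and any cluster point is shown to satisfy the fixed-point relations by continuity of the proximal maps, forcing it to equal the unique $z^\star$. Replace your last paragraph with this step-size/cluster-point argument and the proof goes through.
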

   \begin{proof}
     Let $\bar{z}^{k+1}_i=(\bar{x}^{k+1}_i,\bar{u}^{k+1}_i)$ denote the updated vector belonging to agent $i$ if that agent was to perform an update at iteration $k$. That is, in \Cref{Alg:PA-VU-Rand:RandAsyn}, $z_i^{k+1}=\bar{z}_i^{k+1}$ if agent $i$ is activated and $z_i^{k+1}={z}_i^{k}$ if it remains idle. 
   Let us define the global vector $\bar{z}^{k+1}=(\bar{z}_1^{k+1},\ldots,\bar{z}_m^{k+1})$ which corresponds to a deterministic update of all agents at iteration $k$. 
   Using \Cref{lem:gnhineq-gen:RandAsyn} as in \eqref{eq:uglyineq:RandAsyn} we have 
    \begingroup
   \allowdisplaybreaks
   \begin{align} & \|\bar{z}^{k+1}-z^{\star}\|_{D}^{2}-\|z^{k}-z^{\star}\|_{D}^{2}+\|z^{k}-\bar{z}^{k+1}\|_{D}^{2}\nonumber\\
    & -\beta\|x^{k}-\bar{x}^{k+1}\|^{2}+\|\bar{x}^{k+1}-x^{\star}\|_{2M_{g}}^{2}+\|\bar{u}^{k+1}-u^{\star}\|_{2M_{h}}^{2}\nonumber\\
   \leq {}& \hphantom{{}+{}} 2\sum_{i=1}^{m}\langle \nabla_{i}f(x^{k}[i])-\nabla_{i}f(x^{k})+L_{\mydot i}^{\top}(u^{k}[i]-\bar{u}^{k+1}), x_{i}^{\star}-\bar{x}_{i}^{k+1}\rangle\nonumber\\
    & +2\sum_{i=1}^{m}\langle L_{i\mydot}(x^{k}[i]-\bar{x}^{k+1}),\bar{u}_{i}^{k+1}-u_{i}^{\star}\rangle
    \label{eq:uglyineq-2:RandAsyn}
   \end{align}
   \endgroup
   Using \Cref{lem:delayineq-3:RandAsyn} with $q=0$, $v=\bar{x}^{k+1}$, and \eqref{eq:simpleYonug:RandAsyn} (with $\epsilon=\epsilon_2\mu_g^i$ for each term in the summation) we have
   % \begin{align}
   %   \sum_{i=1}^{m}&\langle L_{\mydot i}^{\top}(u^{k}[i]- u^k+{u}^{k}- \bar{u}^{k+1}),x_{i}^{\star}-\bar{x}_{i}^{k+1}\rangle \nonumber \\
   %   \leq& \tfrac{1}{4}\|x^{\star}-\bar{x}^{k+1}\|_{M_{g}}^{2}+BC_{s}S_{1}^{1}(u^{t}) \nonumber\\ &+\sum_{i=1}^{m}\left(\|L_{\mydot i}^{\top}\|^2\tfrac{1}{\mu_{g}^{i}}\|u^{k}-\bar{u}^{k+1}\|^{2}+\tfrac{\mu_{g}^{i}}{4}\|x_{i}^{\star}-\bar{x}_{i}^{k+1}\|^{2}\right) \nonumber\\
   %   =&   \tfrac{1}{2}\|\bar{x}^{k+1}-x^{\star}\|_{M_{g}}^{2}+C_{s}\|\bar{u}^{k+1}-u^{k}\|^{2}+BC_{s}S_{1}^{1}(u^{t}). \label{eq:inner_prod_new:RandAsyn}
   % \end{align} 
   {\mathtight\begin{align}
   \sum_{i=1}^{m}  \langle L_{\mydot i}^{\top}(u^{k}[i]-u^{k}+{u}^{k}-\bar{u}^{k+1}),x_{i}^{\star}-\bar{x}_{i}^{k+1}\rangle
   \leq {} & \tfrac{\epsilon_{2}}{2}\|x^{\star}-\bar{x}^{k+1}\|_{M_{g}}^{2}+\tfrac{BC_{s}}{2\epsilon_{2}}\sw{1}{1}{u}\nonumber\\
    & +\sum_{i=1}^{m}\left(\|L_{\mydot i}^{\top}\|^{2}\tfrac{1}{2\epsilon_{2}\mu_{g}^{i}}\|u^{k}-\bar{u}^{k+1}\|^{2}+\tfrac{\epsilon_{2}\mu_{g}^{i}}{2}\|x_{i}^{\star}-\bar{x}_{i}^{k+1}\|^{2}\right)\nonumber\\
   = {} & \epsilon_{2}\|x^{\star}-\bar{x}^{k+1}\|_{M_{g}}^{2}+\tfrac{C_{s}}{2\epsilon_{2}}\|\bar{u}^{k+1}-u^{k}\|^{2}+\tfrac{BC_{s}}{2\epsilon_{2}}\sw{1}{1}{u}.\label{eq:inner_prod_new:RandAsyn}
   \end{align}}
   Similarly, using \Cref{lem:delayineq-4:RandAsyn} with $q=0$, $y=\bar{u}^{k+1}$ and \eqref{eq:simpleYonug:RandAsyn} (with $\epsilon=\epsilon_3\mu_h^i$ for each term in the summation) we obtain: 
    {\mathtight\begin{align}
     \sum_{i=1}^{m}\langle L_{i\mydot}(x^{k}[i]-\bar{x}^{k+1}),\bar{u}_{i}^{k+1}-u_{i}^{\star}\rangle  \leq{}&{} \epsilon_{3}\|\bar{u}^{k+1}-u^{\star}\|_{M_{h}}^{2} +\tfrac{R_{s}}{2\epsilon_{3}}\|x^{k}-\bar{x}^{k+1}\|^{2} +\tfrac{BR_{\textrm{s}}}{2\epsilon_{3}}\sw{1}{1}{x}. \label{eq:inner_prod_new2:RandAsyn}
   \end{align} }
   Using \eqref{eq:inner_prod_new:RandAsyn}, \eqref{eq:inner_prod_new2:RandAsyn} with $\epsilon_2=0.5$ and $\epsilon_3=1$ together with \Cref{lem:delayineq-1:RandAsyn} with $\epsilon_1=1$, $v=\bar{x}^{k+1}$  in \eqref{eq:uglyineq-2:RandAsyn} yields
   % the last two inequalities in \eqref{eq:uglyineq-2:RandAsyn} as well as  (with $x_i^{k+1}$ replaced by $\bar{x}_i^{k+1}$) yields
   \begin{align} & \|\bar{z}^{k+1}-z^{\star}\|_{D}^{2}-\|z^{k}-z^{\star}\|_{D}^{2}+\|z^{k}-\bar{z}^{k+1}\|_{D}^{2}\nonumber\\
   \leq {}& a\sw{1}{1}{x}+b\sw{1}{1}{u}+\left(\beta+R_{s}\right)\|x^{k}-\bar{x}^{k+1}\|^{2} +2C_{s}\|\bar{u}^{k+1}-u^{k}\|^{2},\label{eq:main_ineq:RandAsyn}
   \end{align}
   where $a\coloneqq\left(BR_{s}+B\|\bar{\beta}\|_{M_{g}^{-1}}^{2}\right)$ and $b\coloneqq2BC_{s}$.  
   
   Let $\mathbb{E}_k[\cdot]$ denote the expectation conditioned on the knowledge until time $k$. 
   Moreover, for notational convenience let us define the diagonal probability matrix 
   \begin{align*}
     \Pi=  \blkdiag(p_1 \I_{n_1},\ldots,p_m \I_{n_m},p_1 \I_{r_1},\ldots,p_m \I_{r_m}), \quad D_i =  \blkdiag(\gamma_i^{-1} \I_{n_i}\sigma_i^{-1} \I_{r_i}),
   \end{align*}
   $z_i^k=(x_i^k,u_i^k)$ and $z_i^\star=(x_i^\star,u_i^\star)$. 
   Consequently, using the fact that $D$ is diagonal we have 
    \begingroup
   \allowdisplaybreaks
     \begin{align}
       \mathbb{E}_{k}\left\{ \|z^{k+1}-z^{\star}\|_{\Pi^{-1}D}^{2}\right\} & =\mathbb{E}_{k}\left[\sum_{i=1}^{m}p_{i}^{-1}\|z_{i}^{k+1}-z_{i}^{\star}\|_{D_{i}}^{2}\right]\nonumber \\
     % &=\sum_{i=1}^{m}p_{i}^{-1}\mathbb{E}_{k}\left[\|z_{i}^{k+1}-z_{i}^{\star}\|_{D_{i}}^{2}\right]\nonumber\\
     &=\sum_{i=1}^{m}p_{i}^{-1}\left(p_{i}\|\bar{z}_{i}^{k+1}-z_{i}^{\star}\|_{D_{i}}^{2}+(1-p_{i})\|z_{i}^{k}-z_{i}^{\star}\|_{D_{i}}^{2}\right) \nonumber\\ &
     =\sum_{i=1}^{m}\left(\|\bar{z}_{i}^{k+1}-z_{i}^{\star}\|_{D_{i}}^{2}+\tfrac{(1-p_{i})}{p_{i}}\|z_{i}^{k}-z_{i}^{\star}\|_{D_{i}}^{2}\right) \nonumber\\ &
     =\|\bar{z}^{k+1}-z^{\star}\|_{D}^{2}+\|z^{k}-z^{\star}\|_{\Pi^{-1}D}^{2}-\|z^{k}-z^{\star}\|_{D}^{2}. \label{eq:Expe:RandAsyn}
     \end{align}
     \endgroup
   Therefore, using \eqref{eq:main_ineq:RandAsyn} we obtain
   \begin{align}
     \mathbb{E}_{k}\left\{ \|z^{k+1}-z^{\star}\|_{\Pi^{-1}D}^{2}\right\}   \leq{}&\|z^{k}-z^{\star}\|_{\Pi^{-1}D}^{2}-\|z^{k}-\bar{z}^{k+1}\|_{D}^{2}+2C_{s}\|\bar{u}^{k+1}-u^{k}\|^{2} \nonumber\\&
     +a\sw{1}{1}{x}+b\sw{1}{1}{u}+
     \left(\beta+R_{s}\right)\|x^{k}-\bar{x}^{k+1}\|^{2}.\label{eq:Sto-Fej:RandAsyn}
   \end{align}
   Let 
     $X^{k}\coloneqq\sum_{\tau=[k-B]_+}^{k-1}\left(\tau-(k-B)+1\right)\|x^{\tau+1}-x^\tau\|^2$ and $U^{k}\coloneqq\sum_{\tau=[k-B]_+}^{k-1}\left(\tau-(k-B)+1\right)\|u^{\tau+1}-u^\tau\|^2$. 
   % \end{align*}
   It is easy to see that
   \begin{equation*}
    X^{k+1} =X^{k}-\sum_{\tau=[k-B]_+}^{k-1}\|x^{\tau+1}-x^{\tau}\|^{2}+B\|x^{k}-x^{k+1}\|^{2}.
   \end{equation*}
   Arguing as in \eqref{eq:Expe:RandAsyn} we have 
   $\mathbb{E}_{k}\left\{ \|x^{k+1}-x^{k}\|^{2}\right\} 
   =  \sum_{i=1}^m p_i\|\bar{x}^{k+1}_i-x_i^{k}\|^{2}$. 
   Therefore 
   \begin{align}
     \mathbb{E}_{k}\left\{ X^{k+1}\right\} \leq{} X^{k}-\sum_{\tau=[k-B]_+}^{k-1}\|x^{\tau+1}-x^{\tau}\|^{2}+B\sum_{i=1}^m p_i\|\bar{x}^{k+1}_i-x_i^{k}\|^{2},\label{eq:EXk:RandAsyn}
   \end{align}
   Similarly for the dual variables we have % define the following. 
   % \begin{align*}
   %   U^{k}&\coloneqq\sum_{\tau=[k-B]_+}^{k-1}\left(\tau-(k-B)+1\right)\|u^{\tau+1}-u^\tau\|^2, 
   % \end{align*}
   % and we have
   \begin{align}
     \mathbb{E}_{k}\left\{ U^{k+1}\right\} \leq{} U^{k}-\sum_{\tau=[k-B]_+}^{k-1}\|u^{\tau+1}-u^{\tau}\|^{2} +B\sum_{i=1}^m p_i\|\bar{u}^{k+1}_i-u_i^{k}\|^{2}.\label{eq:EUk:RandAsyn}
   \end{align}
   Consider the Lyapunov function  $v^k \coloneqq \|z^{k}-z^{\star}\|_{\Pi^{-1}D}^{2} + aX^k +bU^k$. 
   Then, using \eqref{eq:Sto-Fej:RandAsyn}, \eqref{eq:EXk:RandAsyn} and \eqref{eq:EUk:RandAsyn} we obtain 
   \begin{align*}
     \mathbb{E}_{k}\left\{ v^{k+1}\right\}  \leq {}&v^{k}-\|z^{k}-\bar{z}^{k+1}\|_{D}^{2} +aB\sum_{i=1}^{m}p_{i}\|\bar{x}_{i}^{k+1}-x_{i}^{k}\|^{2}+bB\sum_{i=1}^{m}p_{i}\|\bar{u}_{i}^{k+1}-u_{i}^{k}\|^{2} \nonumber \\&
     +\left(\beta+R_{s}\right)\|x^{k}-\bar{x}^{k+1}\|^{2}+2C_{s}\|\bar{u}^{k+1}-u^{k}\|^{2}.
   \end{align*}
   Therefore,  if \Cref{ass:GenL-Rand:RandAsyn} holds then there exists $\bar{c}>0$ such that  
   % \begin{align*}
     $\mathbb{E}_{k}\left\{ v^{k+1}\right\}  \leq v^{k}-\bar{c}\|z^{k}-\bar{z}^{k+1}\|^{2}$.
   % \end{align*}
   % \textcolor{blue}{Should we remove the rest and just refer to papers?}
   Since $\|z^{k}-{z}^{k+1}\|\leq \|z^{k}-\bar{z}^{k+1}\|$, we conclude
   by the Robbins-Siegmund lemma \cite{Robbins1985convergence} %we have that almost surely $\sum_{k=0}^\infty\|z^{k}-\bar{z}^{k+1}\|^2<\infty$.
    that almost surely $\|z^{k}-{z}^{k+1}\|$ converges to zero, and consequently by \Cref{lem:delays:RandAsyn} so does $\|z^{k}-{z}^{k}[i]\|$. 
   Moreover, as a second consequence of the Robbins-Siegmund lemma we have that $\seq{v^k}$ and in particular $\|z^k-z^\star\|_{\Pi^{-1}D}$ converges to some $[0,\infty)$-valued variable. The convergence result follows by standard arguments as in \cite[Thm. 3]{Bianchi2016Coordinate} and \cite[Prop. 2.3]{Combettes2015Stochastic} and using continuity of the proximal operator.
   \end{proof}
   In the next theorem we establish linear convergence for \Cref{Alg:PA-VU-Rand:RandAsyn} and provide an explicit convergence rate.  
   \begin{thm}\label{thm:rand-linear:RandAsyn}
       Suppose that \Cref{ass:delays:RandAsyn}, \ref{ass:optProb:RandAsyn} and \ref{ass:GenL-2:RandAsyn} are satisfied. Let $c<\min\{p_1,\ldots,p_m\}$ be a positive scalar and set $\gamma_i=\tfrac{1}{(p_i/c-1)\mu_g^i}$ and $\sigma_i=\tfrac{1}{(p_i/c-1)\mu_h^i}$ for $i=1,\ldots,m$. Moreover, let 
   \begin{align*}
     \delta_1=&\frac{1}{2B\|\bar{\beta}\|_{M_{g}^{-1}}^{2}+2BR_{\textrm{s}}+2R_{s}+\beta}\min_{i}\{(p_{i}-c)\mu_{g}^{i}\}, \nonumber\\
   \delta_2=&\frac{1}{4C_{s}(1+B)}\min_{i}\{(p_{i}-c)\mu_{h}^{i}\}.
   \end{align*}
         % where $X=2B\|\bar{\beta}\|_{M_{g}^{-1}}^{2}+2BR_{\textrm{s}}+2R_{s}+\beta$ and $Y=4C_{s}(1+B)$.. 
       Suppose that $c$ is such that it satisfies 
   % \begin{align*}
     $\frac{1}{(1-c)^{B}}+c\leq 1+\min\{\delta_1,\delta_2\}$.
   % \end{align*}
   (such $c$ always exist close enough to zero).  
   % \begin{align*}
   %   \frac{1}{(1-c)^{B}}+c\leq & 1+\min\{\tfrac{1}{X}\min_{i}\{(p_{i}-c)\mu_{g}^{i}\},\tfrac{1}{Y}\min_{i}\{(p_{i}-c)\mu_{h}^{i}\}\},
   % \end{align*}
   % where $X=2B\|\bar{\beta}\|_{M_{g}^{-1}}^{2}+2BR_{\textrm{s}}+2R_{s}+\beta$ and $Y=4C_{s}(1+B)$. Then 
   Then, the following holds for the sequence $\seq{z^k}$ generated by  \Cref{Alg:PA-VU-Rand:RandAsyn}:
   \begin{equation*}
     \mathbb{E}\{\|z^k-z^\star\|^2_{{M}}\} \leq  \left( {1-c} \right)^k\|z^0-z^\star\|^2_{{M}}. 
   \end{equation*}
   \end{thm}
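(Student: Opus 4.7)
The plan is to lift the one-step descent estimate that underlies Theorem~\ref{thm:rand:RandAsyn} into a geometric contraction in expectation. With the particular stepsize choice $\gamma_i\mu_g^i=\sigma_i\mu_h^i=c/(p_i-c)$, strong convexity of $g_i$ (and of $h_i^*$) combined with nonexpansiveness of the prox yields, on the event that agent $i$ is active, per-block inequalities that contribute a factor $(p_i-c)/p_i$. Averaging over the independent Bernoulli activations (with probability $p_i$ the block contracts by this factor, with probability $1-p_i$ it stays put) and weighting the primal blocks by $\mu_g^i$ and the dual blocks by $\mu_h^i$ produces a recursion of the form
\begin{equation*}
\mathbb{E}_k\bigl[\|z^{k+1}-z^\star\|_{M}^{2}\bigr]\le(1-c)\,\|z^k-z^\star\|_{M}^{2}+\mathcal{E}^{k},
\end{equation*}
where $\mathcal{E}^{k}$ lumps together the errors caused by the outdated reads $x^{k}[i],u^{k}[i]$ and by the $\beta$-smooth term $f$. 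The crucial point is that the weighting $\gamma_i\propto 1/(p_i-c)$ is what makes the factors $p_i$ cancel upon averaging so that the $(1-c)$ coefficient appears uniformly across blocks.

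Next I estimate $\mathcal{E}^{k}$. Using \Cref{ass:delays:RandAsyn}, each $z_{j}^{\tau^{i}_{j}(k)}$ differs from $z_j^{k}$ by a sum of at most $B$ consecutive block updates; combined with \Cref{ass:GenL-2:RandAsyn} and the partial-Lipschitz bound \eqref{eq:Lipz:RandAsyn}, the same argument as in the proof of \Cref{tm:main-gen:RandAsyn} gives
\begin{equation*}
\mathcal{E}^{k}\le\sum_{j=1}^{B}\Lambda_{j}\,\|z^{k-j+1}-z^{k-j}\|_{M}^{2},
\end{equation*}
with nonnegative constants $\Lambda_{j}$ whose primal part is controlled by $2B\|\bar\beta\|_{M_g^{-1}}^{2}+2BR_{\textrm s}+2R_{\textrm s}+\beta$ and whose dual part is controlled by $4C_{\textrm s}(1+B)$. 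Once weighted against the factors $(p_i-c)\mu_g^i$ and $(p_i-c)\mu_h^i$ coming from the previous step, these bounds produce exactly the quantities $\delta_{1}$ and $\delta_{2}$.

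To turn the single-step bound into a linear rate I introduce the geometrically weighted Lyapunov function
\begin{equation*}
V^{k}\coloneqq\|z^{k}-z^\star\|_{M}^{2}+\rho\sum_{j=1}^{B}(1-c)^{-j}\,\|z^{k-j+1}-z^{k-j}\|_{M}^{2},
\end{equation*}
with the convention $z^{\ell}=z^{0}$ for $\ell\le 0$, so $V^{0}=\|z^{0}-z^\star\|_{M}^{2}$. A reindexing of the sum in $V^{k+1}$ and the identity $(1-c)^{-(j+1)}=(1-c)^{-1}(1-c)^{-j}$ give
\begin{equation*}
\mathbb{E}_k[V^{k+1}]\le(1-c)V^{k}+\rho(1-c)^{-B}\|z^{k-B+1}-z^{k-B}\|_{M}^{2}-\rho(1-c)^{-B}\|z^{k-B+1}-z^{k-B}\|_{M}^{2}+\rho\|z^{k+1}-z^{k}\|_{M}^{2}+\mathcal{E}^{k}-\rho\sum_{j=1}^{B}(1-c)^{-j}\|z^{k-j+1}-z^{k-j}\|_{M}^{2}\cdot 0,
\end{equation*}
after which the two cancelling $(1-c)^{-B}$ terms (one produced by the extra factor $(1-c)$ in front of $V^{k}$, one produced by the tail of the reindexed sum) leave exactly the budget $((1-c)^{-B}+c)-1$ that must absorb $\mathcal{E}^{k}$. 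The hypothesis $(1-c)^{-B}+c\le 1+\min\{\delta_{1},\delta_{2}\}$ is precisely what allows $\rho$ to be chosen so that every $\Lambda_{j}\|z^{k-j+1}-z^{k-j}\|_{M}^{2}$ term in $\mathcal{E}^{k}$ is dominated by the negative contributions available at the same time index. Once $\mathbb{E}_k[V^{k+1}]\le(1-c)V^{k}$ is established, taking total expectation, iterating, and bounding $\|z^{k}-z^\star\|_{M}^{2}\le V^{k}$ together with $V^{0}=\|z^{0}-z^\star\|_{M}^{2}$ yields the stated rate.

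The main obstacle is the delicate bookkeeping in estimating $\mathcal{E}^{k}$: because $x^{k}[i]$ and $u^{k}[i]$ depend on the full activation history, $\mathcal{E}^{k}$ is not conditionally deterministic given $z^{k}$, and the independence of the Bernoulli activations must be used to decouple the delay terms from the current-iterate contraction while keeping the factor $(p_i-c)$ intact. The companion difficulty is algebraic: choosing a single scalar $\rho$ so that the geometric weights $(1-c)^{-j}$ match the constants $\Lambda_{j}$ across all lags $j\in\{1,\dots,B\}$ is what ultimately shapes the sufficient condition displayed in the statement, and it is here that the factor $(1-c)^{-B}$ (rather than, say, $B(1-c)^{-1}$) emerges.
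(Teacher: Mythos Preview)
Your high-level plan matches the paper's: derive a one-step recursion with contraction factor $(1-c)$ plus delay-induced errors, then absorb the errors via a geometric summation (the paper does this by dividing by $(1-c)^{k+1}$ and summing, invoking \cite[Lem.~1]{Aytekin2016Analysis}; your Lyapunov function $V^k$ is the dual formulation of the same trick). The stepsize identity $D=(\tfrac{1}{c}\Pi-\I)M$ is also exactly what the paper uses to make the $(1-c)$ factor appear after averaging.

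However, your one-step recursion is missing a term that the argument cannot do without. The correct inequality, obtained from \eqref{eq:uglyineq-2:RandAsyn} with the $\epsilon$'s chosen as in the paper, is
\[
\mathbb{E}_k\bigl[\|z^{k+1}-z^\star\|_M^2\bigr]\le(1-c)\|z^k-z^\star\|_M^2
-c\|\bar z^{k+1}-z^k\|_D^2+\mathcal{E}^k,
\]
and the error $\mathcal{E}^k$ contains, in addition to the lag terms you list, the \emph{current-step} contributions $c(2R_s+\beta)\|\bar x^{k+1}-x^k\|^2+4cC_s\|\bar u^{k+1}-u^k\|^2$. These do not fit into your bound $\sum_{j=1}^{B}\Lambda_j\|z^{k-j+1}-z^{k-j}\|_M^2$ because they involve $\bar z^{k+1}$, not any past iterate. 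In the paper they are absorbed precisely by the negative term $-c\|\bar z^{k+1}-z^k\|_D^2$, and the ratio between that negative term (which, with your stepsizes, equals $\sum_i(p_i-c)\mu_g^i\|\bar x_i^{k+1}-x_i^k\|^2+\sum_i(p_i-c)\mu_h^i\|\bar u_i^{k+1}-u_i^k\|^2$) and the positive error constants is exactly where $\delta_1,\delta_2$ come from. Your Lyapunov calculation also needs this negative term to pay for the new increment $\rho\,\mathbb{E}_k[\|z^{k+1}-z^k\|_M^2]$; without it the recursion for $V^k$ does not close.

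Two smaller points. First, the delay bounds of \Cref{lem:delayineq:RandAsyn} produce increments in the \emph{unweighted} norm, and the negative term is in the $D$-norm; the paper reconciles this by bounding $\|x^{\tau+1}-x^\tau\|\le\|\bar x^{\tau+1}-x^\tau\|$ and then comparing everything in the Euclidean norm against the $D$-norm via $\min_i(p_i-c)\mu_g^i$ and $\min_i(p_i-c)\mu_h^i$. Your Lyapunov uses $\|\cdot\|_M$ throughout, which does not match either side without an extra argument. Second, the displayed expansion of $\mathbb{E}_k[V^{k+1}]$ is not well formed (the $\pm\rho(1-c)^{-B}$ terms cancel trivially and the trailing ``$\cdot 0$'' is spurious); once you reinstate the $-c\|\bar z^{k+1}-z^k\|_D^2$ term, the bookkeeping becomes a straightforward telescoping and you will see the condition $(1-c)^{-B}+c\le 1+\min\{\delta_1,\delta_2\}$ emerge as the requirement that the negative term dominate the sum of $B+1$ geometrically weighted positive terms.
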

   \begin{proof}
     As in the deterministic case in \Cref{tm:main-gen-linear:RandAsyn}, in order to show linear convergence we  retain some of the strong convexity terms. Consider \eqref{eq:uglyineq-2:RandAsyn} and use \Cref{lem:delayineq-1:RandAsyn} with $\epsilon_1=0.5$, $v=\bar{x}^{k+1}$, \eqref{eq:inner_prod_new:RandAsyn} and \eqref{eq:inner_prod_new2:RandAsyn} with $\epsilon_2=0.25$, $\epsilon_3=0.5$ to derive: 
   \begin{align} & \|\bar{z}^{k+1}-z^{\star}\|_{D+M}^{2}-\|z^{k}-z^{\star}\|_{D}^{2}+\|\bar{z}^{k+1}-z^{k}\|_{D}^{2}\nonumber\\
   \leq {} & 2B\left(\|\bar{\beta}\|_{M_{g}^{-1}}^{2}+R_{\textrm{s}}\right)\sw{1}{1}{x}+4BC_{s}\sw{1}{1}{u}\nonumber\\
    & +(2R_{s}+\beta)\|\bar{x}^{k+1}-x^{k}\|^{2}+4C_{s}\|\bar{u}^{k+1}-u^{k}\|^{2},\label{eq:uglyineq-4:RandAsyn}
   \end{align}
   where  $M\coloneqq \blkdiag(M_{g},M_{h})$. Given the choice of stepsizes we have $D=\blkdiag(\Gamma^{-1},\Sigma^{-1})=\left(\tfrac{1}{c}\Pi-\I\right)M$. Using this and arguing as in \eqref{eq:Expe:RandAsyn}  we have:  
   % \begin{align}\mathbb{E}_{k} & \left\{ \|z^{k+1}-z^{\star}\|_{\Pi^{-1}X}^{2}\right\} \\
   %  & =\|\bar{z}^{k+1}-z^{\star}\|_{X}^{2}+\|z^{k}-z^{\star}\|_{\Pi^{-1}X}^{2}-\|z^{k}-z^{\star}\|_{X}^{2}
   % \end{align}
   \begin{align*}\mathbb{E}_{k}  \left\{ \|z^{k+1}-z^{\star}\|_{M}^{2}\right\} 
     ={}&\|\bar{z}^{k+1}-z^{\star}\|_{\Pi M}^{2}+\|z^{k}-z^{\star}\|_{M}^{2}-\|z^{k}-z^{\star}\|_{\Pi M}^{2}\\
     ={}&c\|\bar{z}^{k+1}-z^{\star}\|_{D+M}^{2}+\|z^{k}-z^{\star}\|_{M}^{2}-c\|z^{k}-z^{\star}\|_{D+M}^{2}.
   \end{align*}
   Combining this with \eqref{eq:uglyineq-4:RandAsyn} yields  
   \begin{align}  \mathbb{E}_{k}\left\{ \|z^{k+1}-z^{\star}\|_{M}^{2}\right\} 
   \leq {}& (1-c)\|z^{k}-z^{\star}\|_{M}^{2}-c\|\bar{z}^{k+1}-z^{k}\|_{D}^{2}\nonumber\\
    & +2cB\left(\|\bar{\beta}\|_{M_{g}^{-1}}^{2}+R_{\textrm{s}}\right)\sw{1}{1}{x}+4cBC_{s}\sw{1}{1}{u}\nonumber\\
    & +c(2R_{s}+\beta)\|\bar{x}^{k+1}-x^{k}\|^{2}+4cC_{s}\|\bar{u}^{k+1}-u^{k}\|^{2}.\label{eq:explin:RandAsyn}
   \end{align}
   Next, note that by definition of $\bar{z}^k=(\bar{x}^k,\bar{u}^k)$ we have 
   \begin{align*}
     \sw{1}{1}{x} \leq \sum_{\tau=[k-B]_+}^{\smash{k-1}} \|\bar{x}^{\tau+1}-x^\tau\|^2  \leq  \sum_{\tau=[k-B]_+}^{\smash{k}} \|\bar{x}^{\tau+1}-x^\tau\|^2,
   \end{align*}
   and similarly for the dual vector. Using this in \eqref{eq:explin:RandAsyn} yields:
    \begingroup
   \allowdisplaybreaks
   \begin{align*}\mathbb{E}_{k}  \left\{ \|z^{k+1}-z^{\star}\|_{M}^{2}\right\} 
     \leq{}&(1-c)\|z^{k}-z^{\star}\|_{M}^{2}-c\|\bar{z}^{k+1}-z^{k}\|_{D}^{2}\nonumber\\
     & +c\left(2B\|\bar{\beta}\|_{M_{g}^{-1}}^{2}+2BR_{\textrm{s}}+2R_{s}+\beta\right)\sum_{\tau=[k-B]_{+}}^{k}\|\bar{x}^{\tau+1}-x^{\tau}\|^{2}\nonumber\\
    & +4cC_{s}(1+B)\sum_{\tau=[k-B]_{+}}^{k}\|\bar{u}^{\tau+1}-u^{\tau}\|^{2}.
   \end{align*}
   \endgroup
   The result follows by taking total expectation from both sides,   
     dividing by ${(1-c)^{k+1}}$ and summing over $k$ from $0$ to $p$, see \cite[Lem. 1]{Aytekin2016Analysis}. 
   \end{proof}
   Note that owing to the diagonal metric used in the proofs of \Cref{thm:rand-linear:RandAsyn,thm:rand:RandAsyn}, the independent activation pattern in \Cref{Alg:PA-VU-Rand:RandAsyn} can be replaced with the more general random sweeping strategy as in \cite{Bianchi2016Coordinate,Latafat2019Randomized}. 

   \section{Numerical simulations}\label{sec:Formation}

%!TEX root = ../../../main.tex

In this section we revisit the formation control example defined in \eqref{eq:formation}. For the dynamics of each agent/robot we used the model of \cite{Schouwenaars2004Decentralized} with exact discretization of steplength $\Delta T=1$. The state and input cost matrices and the constraints sets $\mathcal{W}_i$ are as in \cite[\S VI]{Latafat2019Randomized}. 

Let $\hat{C}$ be a linear mapping such that $\hat{C}w_i= C\xi_i$ and $L_{i\mydot}$ be such that $L_{i\mydot}w=(E_iw_i,w_i)$. Minimization \eqref{eq:formation} can be formulated as an instance of \eqref{eq:main-prob:RandAsyn} by setting $g_i(w_i)=\tfrac{1}2w_i^\top Q_i w_i$, $f(w)=\sum_{i=1}^m \tfrac{\lambda_i}{2}\sum_{j\in\mathcal{A}_i}\|\hat{C}(w_i- w_j) - d_{ij}\|^2$,  
 $h_{i}(y_{i},v_{i})=\delta_{b_{i}}(y_{i})+\delta_{\mathcal{W}_{i}}(v_{i})$.  Therefore, implementation of the algorithms presented in this paper would only require simple operations such as matrix-vector products, projections onto points and projections onto sets $\mathcal{W}_i$ (which are simple boxes). 

% \hl{
In our simulations, horizon length $3$ was used. The delays between agents are randomly generated integers in the interval $[0,B]$. We consider two numerical simulations. In the first one we set $m=5$, $B=1$ with initial polygon configuration and enforce an arrow formation by appropriate selection of $d_{ij}$. The interested reader may refer to \cite[\S VI]{Latafat2019Randomized} for additional details of the formation setup. As discussed in the introduction,  minimization \eqref{eq:formation} is an example of partial coupling and \Cref{Alg:PA-VU-Diagonal:RandAsyn} is the suitable choice. In the first numerical experiment, depicted in \Cref{fig2} (left), we use the theoretical stepsize bound in \eqref{eq:conv-diag:RandAsyn}. For comparison, we also considered the dual decomposition approach of \cite{Raffard2004Distributed} that is based on the subgradient method (although this algorithm is not studied with communication delays). For comparison, \Cref{Alg:PA-VU:RandAsyn,Alg:PA-VU-Rand:RandAsyn} are also plotted even though they are not designed for this type of problem. \Cref{Alg:PA-VU-Rand:RandAsyn} is used with probabilities of activation $p_i$ set to $0.2$ and $0.8$. It is observed that the convergence rate of the proposed algorithms are linear.  %while the dual decomposition approach struggles to reach a high precision. %}

In the second numerical experiment, depicted in \Cref{fig2} (right), we considered a larger problem with $m=50$ and the maximum delay $B=10$. We simulated the algorithms with nominal stepsizes. It is observed that \Cref{Alg:PA-VU:RandAsyn} and the dual decomposition approach struggle to reach a high precision. Interestingly, the randomized algorithm \Cref{Alg:PA-VU-Rand:RandAsyn} is able to overcome this. Moreover, even with larger delays  the algorithms are convergent with nominal stepsizes while the theoretical stepsize may become too small resulting in slow convergence in practice. It would be interesting to study if the stepsize conditions presented in this paper can be relaxed for the special case when $h_i$, $g_i$ are indicator and quadratic functions.

\section{Conclusions}
We considered the application of primal-dual algorithms for solving structured optimization problems over message-passing architectures. The coupling between agents was classified as total and partial coupling. For each case a separate algorithm was studied and it was shown that the communication delay is tolerated provided that the stepsizes are small enough, and that some strong convexity assumption holds. In the case of total coupling a variant of the proposed algorithm was studied  that allows random and independent activation of the agents. 
Future work consists of extending the convergence analysis to the partially asynchronous framework and exploring Lyapunov functions that allow for nonconvex cost functions.

% \tikzexternaldisable
\begin{figure}
  \center
  \includegraphics[width=0.92\linewidth]{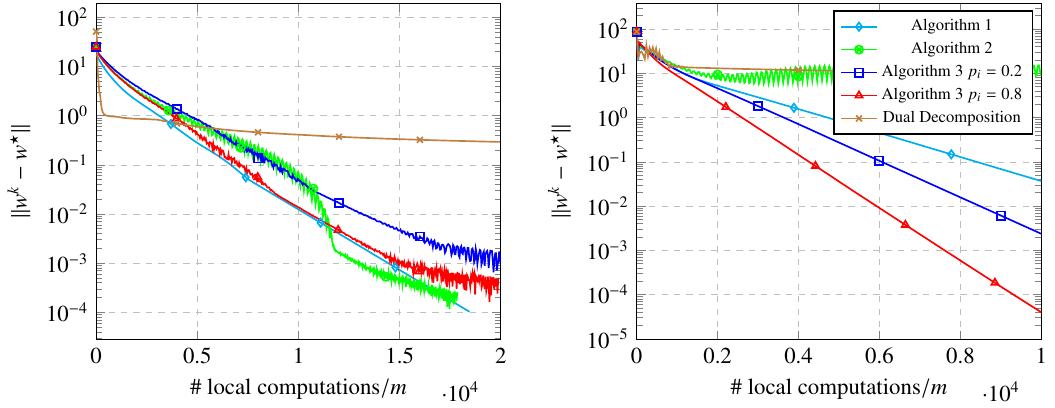}
  \caption{Comparison for the convergence of the algorithms for $m=5$, $B=1$ (left) and $m=50$,  $B=10$ (right). %The $y$-axis denotes the order of reduction of the residue, $R^k=\bar{v}^k-v^k$, where $v$ is the stacked primal and dual variables of all agents. 
  }
  \label{fig2}
\end{figure}

	\ifarxiv
		\bibliographystyle{tfs}
        \bibliography{Bibliography.bib}

        \begin{appendix}
        \section{Ommited lemmas}

          %!TEX root = ../../../../main.tex
          This appendix includes some results that are omitted in the main body of the text. 
          % This section provides a few important lemmas that are committed in the main body of the paper. 
          \begin{lem}
          	Let $q:\R^n\to \Rinf$ be a proper closed $\mu$-convex function for some $\mu\geq0$. For all $r\in\R^n$, $\omega\in\R^n$ and $\omega_\rho\coloneqq \prox_{\rho q}(\omega)$ the following holds
          \begin{align}
          q(r) -q(\omega_\rho)  
          \geq   \tfrac{1}{\rho}\langle \omega-\omega_\rho,r-\omega_\rho\rangle+ \tfrac{\mu}{2}\|r-\omega_\rho\|^2.
          %\nonumber \\=&\tfrac{1}{2\rho}\| z-z_\rho\|^2- \tfrac{1}{2\rho}\|z-x\|^2+ (\tfrac{\mu}{2}+\tfrac{1}{2\rho})\|x-z_\rho\|^2.
          \label{eq:strconvexity:RandAsyn}
          \end{align}  
          \end{lem} 
          \begin{proof}
          	The inequality follows immediately from the definition of strong convexity and the characterization of proximal mapping \cite[Prop. 16.44]{Bauschke2017Convex}.  
          \end{proof}
          For all   $a,b,c\in\R^n$ and all positive definite matrices $V\in\R^{n\times n}$ the following elementary equality holds.  
          \begin{equation}\label{eq:basic-equality:RandAsyn}
          2\langle a-b,c-b\rangle_V = \|a-b\|_V^2 + \|c-b\|_V^2 -\|a-c\|_V^2. 
          \end{equation} 
           We also make use of the following inequality:
          \begin{equation}\label{eq:simpleYonug:RandAsyn}
            \langle x,y\rangle \leq \tfrac{\varepsilon}{2}\|x\|^2+\tfrac{1}{2\varepsilon}\|y\|^2,\quad \forall x,y\in\R^n, \epsilon>0. 
          \end{equation}
           \Cref{lem:delays:RandAsyn} provides a basic inequality which is crucial in our analysis.  
          Refer to \cite[Chap. 7.5]{Bertsekas1989Parallel} and \cite[Lem. 4]{Zhou2018Distributed} 
          for the proof.  
          \begin{lem}\label{lem:delays:RandAsyn} Let \Cref{ass:delays:RandAsyn} hold. Consider a vector $w^k=(w_1^k,\ldots,w^k_m)$ and its outdated version $w^k[i]$, \textit{cf.} \eqref{eq:outdated:RandAsyn}. Then, the following inequality holds
             \begin{align} 
             \|w^k-w^k[i]\| \leq &\sum_{\tau=[k-B]_+}^{k-1} \|w^{\tau+1}-w^\tau\|.\label{eq:lem:delays-1:RandAsyn} 
             \end{align}
          \end{lem}
          %  Let $l$ and $d$ be two nonnegative scalars. For a given sequence $\seq{w^t}[t\in\N]$ we define the following for simplicity of notation:
          % \begin{equation*}%\label{eq:simplifying:RandAsyn}
          %   \sw{l}{d}{w} \coloneqq 
          %       \sum_{\tau=[k-B+1-l]_+}^{\smash{k-d}} \|w^{\tau+1}-w^\tau\|^2.
          % \end{equation*}
          % Summing $\sw{l}{d}{w}$ over $k$ from $0$ to $p>0$ and noting that each term is repeated at most $B+l-d$ times we obtain:   
          % \begin{align}  
          %     \sum_{k=0}^p \sw{l}{d}{w} {}&{}\leq (B+l-d) \sum_{\mathclap{k=[1-B-l]_+}}^{p-d} \|w^{k+1}-w^k\|^2\leq (B+l-d) \sum_{\mathclap{k=0}}^{\smash{p}} \|w^{k+1}-w^k\|^2.
          %      \label{eq:zeroton:RandAsyn} 
          %   \end{align}
            % This inequality is instrumental for our convergence analysis.  
          % In the next Lemma we derive four main inequalities using \Cref{lem:delays:RandAsyn}.  
          \begin{lem} \label{lem:delayineq:RandAsyn} 
          Suppose that $\mu_h^i,\mu_g^i>0$, $i=1,\ldots,m$, and in the case of \cref{lem:delayineq-1:RandAsyn} let \Cref{ass:optProb-3:RandAsyn} hold.   
           Then, the following hold for any positive constants $\epsilon_1$, $\epsilon_2$, $\epsilon_3$, nonnegative integer $q$ and generic vectors $v=(v_1,\ldots,v_m)$, $y=(y_1,\ldots,y_m)$ with $v_i\in\R^{n_i}$ and $y_i\in\R^{r_i}$:
          	% Assume that $\mu_g^i>0$ for all $i=1,\ldots,m$, and $\epsilon_1,\ldots,\epsilon_3$ are positive scalars. Then 
          	\begin{enumerate}
          		\item \label{lem:delayineq-1:RandAsyn} 
          		\(\sum_{i=1}^{m}\langle \nabla_{i}f(x^{k}[i])-\nabla_{i}f(x^{k}), x_i^\star-v_{i}\rangle \) \noindent
          	\(\displaystyle
          	\!\leq \!\tfrac{\epsilon_1}{2}\|v-x^\star\|^{2}_{M_g}\!+\tfrac{B}{2\epsilon_1}\|\bar{\beta}\|^2_{M_g^{-1}}\sw{1}{1}{x}
          \)	 
          	\item \label{lem:delayineq-3:RandAsyn} \(
          	\sum_{i=1}^m\langle L_{\mydot i}^{\top}\left(u^{k}[i]-u^{k+q}\right),x_{i}^{\star}-v_i\rangle \) 
          	\( \!\leq{}\! 
          	\tfrac{\epsilon_2}{2}\|v-x^{\star}\|^{2}_{M_g}\!+\tfrac{C_{\textrm s}(B+q)}{2\epsilon_2}\sw{1}{1-q}{u}
          	\)
          	\item \label{lem:delayineq-4:RandAsyn} \(
          	\sum_{i=1}^m\langle L_{i\mydot}(x^{k}[i]-x^{k+q}),y_i-u_{i}^{\star}\rangle\)  
          	\( \displaystyle \!\leq{}\! 
          	\tfrac{\epsilon_3}{2}\|y-u^{\star}\|^{2}_{M_h}\!+\tfrac{R_{\textrm s}(B+q)}{2\epsilon_3}\sw{1}{1-q}{x}
          	%\sum_{\mathclap{\tau=[k+1-B]_{+}}}^{k}\|x^{\tau+1}-x^{\tau}\|^{2}.
          	\)
          	\end{enumerate}  
          	 where $R_{\rm s}$, $C_{\rm s}$ are defined in \eqref{eq:RsCs:RandAsyn}, $M_g,M_h$ in \eqref{eq:conv-diag:RandAsyn} and \eqref{eq:M:RandAsyn}. 
          \end{lem}
          \begin{proof}
          	We provide the proof for the first inequality and omit the rest noting that they are derived following a similar argument. Using the Cauchy–Schwarz inequality we have 
          	\begin{align*}
          		\sum_{i=1}^{m}\langle x^\star_i-v_{i},\nabla_{i}f(x^{k}[i])-\nabla_{i}f(x^{k})\rangle  {}& {}\leq \sum_{i=1}^{m}\|v_{i}-x^\star_i\|\|\nabla_{i}f(x^{k})-\nabla_{i}f(x^{k}[i])\|\nonumber\\
           &{} \overset{\eqref{eq:Lipz:RandAsyn}}{\leq}{}\sum_{i=1}^{m}\bar{\beta}_i\|v_{i}-x^\star_i\|\|x^{k}-x^{k}[i]\|\nonumber\\
           & {}\overset{\eqref{eq:lem:delays-1:RandAsyn}}{\leq}{}\sum_{i=1}^{m}\bar{\beta}_i\|v_{i}-x^\star_i\|\Big(\sum_{\tau=[k-B]_{+}}^{k-1}\|x^{\tau+1}-x^{\tau}\|\Big)\nonumber\\
           &{} ={}\sum_{i=1\vphantom{B)}}^{m}\sum_{\tau=[k-B]_{+}}^{k-1}\bar{\beta}_i\|v_{i}-x^\star_i\|\|x^{\tau+1}-x^{\tau}\|\nonumber\\
           \dueto{\eqref{eq:simpleYonug:RandAsyn} with $\varepsilon=\tfrac{\mu_g^i\epsilon_1}{B}$}
           & \leq\tfrac{1}{2}\sum_{i=1\vphantom{B)}}^{m}\sum_{\tau=[k-B]_{+}}^{k-1}\!\!\!\!\!\!{\Big(\tfrac{\mu_g^i\epsilon_1}{B}\|v_{i}\!\!\!-x^\star_i\|^{2}\!\!+\tfrac{\bar{\beta}_i^2B}{\mu_g^i\epsilon_1}\|x^{\tau+1}\!\!-x^{\tau}\|^{2}\Big)}\nonumber\\
           & \leq\tfrac{\epsilon_1}{2}\|v-x^\star\|^{2}_{M_g}+\tfrac{B}{2\epsilon_1}\|\bar{\beta}\|^2_{M_g^{-1}}\sum_{\mathclap{\tau=[k-B]_{+}}}^{k-1}\|x^{\tau+1}-x^{\tau}\|^{2},
          \end{align*}
          proving the claim. 
          \end{proof}

        \end{appendix}
	\else
		\phantomsection
		\addcontentsline{toc}{section}{References}
        \bibliographystyle{tfs}
        \bibliography{Bibliography.bib}
        \appendix
        \section{Ommited lemmas}

        %!TEX root = ../../../../main.tex
        This appendix includes some results that are omitted in the main body of the text. 
        % This section provides a few important lemmas that are committed in the main body of the paper. 
        \begin{lem}
        	Let $q:\R^n\to \Rinf$ be a proper closed $\mu$-convex function for some $\mu\geq0$. For all $r\in\R^n$, $\omega\in\R^n$ and $\omega_\rho\coloneqq \prox_{\rho q}(\omega)$ the following holds
        \begin{align}
        q(r) -q(\omega_\rho)  
        \geq   \tfrac{1}{\rho}\langle \omega-\omega_\rho,r-\omega_\rho\rangle+ \tfrac{\mu}{2}\|r-\omega_\rho\|^2.
        %\nonumber \\=&\tfrac{1}{2\rho}\| z-z_\rho\|^2- \tfrac{1}{2\rho}\|z-x\|^2+ (\tfrac{\mu}{2}+\tfrac{1}{2\rho})\|x-z_\rho\|^2.
        \label{eq:strconvexity:RandAsyn}
        \end{align}  
        \end{lem} 
        \begin{proof}
        	The inequality follows immediately from the definition of strong convexity and the characterization of proximal mapping \cite[Prop. 16.44]{Bauschke2017Convex}.  
        \end{proof}
        For all   $a,b,c\in\R^n$ and all positive definite matrices $V\in\R^{n\times n}$ the following elementary equality holds.  
        \begin{equation}\label{eq:basic-equality:RandAsyn}
        2\langle a-b,c-b\rangle_V = \|a-b\|_V^2 + \|c-b\|_V^2 -\|a-c\|_V^2. 
        \end{equation} 
         We also make use of the following inequality:
        \begin{equation}\label{eq:simpleYonug:RandAsyn}
          \langle x,y\rangle \leq \tfrac{\varepsilon}{2}\|x\|^2+\tfrac{1}{2\varepsilon}\|y\|^2,\quad \forall x,y\in\R^n, \epsilon>0. 
        \end{equation}
         \Cref{lem:delays:RandAsyn} provides a basic inequality which is crucial in our analysis.  
        Refer to \cite[Chap. 7.5]{Bertsekas1989Parallel} and \cite[Lem. 4]{Zhou2018Distributed} 
        for the proof.  
        \begin{lem}\label{lem:delays:RandAsyn} Let \Cref{ass:delays:RandAsyn} hold. Consider a vector $w^k=(w_1^k,\ldots,w^k_m)$ and its outdated version $w^k[i]$, \textit{cf.} \eqref{eq:outdated:RandAsyn}. Then, the following inequality holds
           \begin{align} 
           \|w^k-w^k[i]\| \leq &\sum_{\tau=[k-B]_+}^{k-1} \|w^{\tau+1}-w^\tau\|.\label{eq:lem:delays-1:RandAsyn} 
           \end{align}
        \end{lem}
        %  Let $l$ and $d$ be two nonnegative scalars. For a given sequence $\seq{w^t}[t\in\N]$ we define the following for simplicity of notation:
        % \begin{equation*}%\label{eq:simplifying:RandAsyn}
        %   \sw{l}{d}{w} \coloneqq 
        %       \sum_{\tau=[k-B+1-l]_+}^{\smash{k-d}} \|w^{\tau+1}-w^\tau\|^2.
        % \end{equation*}
        % Summing $\sw{l}{d}{w}$ over $k$ from $0$ to $p>0$ and noting that each term is repeated at most $B+l-d$ times we obtain:   
        % \begin{align}  
        %     \sum_{k=0}^p \sw{l}{d}{w} {}&{}\leq (B+l-d) \sum_{\mathclap{k=[1-B-l]_+}}^{p-d} \|w^{k+1}-w^k\|^2\leq (B+l-d) \sum_{\mathclap{k=0}}^{\smash{p}} \|w^{k+1}-w^k\|^2.
        %      \label{eq:zeroton:RandAsyn} 
        %   \end{align}
          % This inequality is instrumental for our convergence analysis.  
        % In the next Lemma we derive four main inequalities using \Cref{lem:delays:RandAsyn}.  
        \begin{lem} \label{lem:delayineq:RandAsyn} 
        Suppose that $\mu_h^i,\mu_g^i>0$, $i=1,\ldots,m$, and in the case of \cref{lem:delayineq-1:RandAsyn} let \Cref{ass:optProb-3:RandAsyn} hold.   
         Then, the following hold for any positive constants $\epsilon_1$, $\epsilon_2$, $\epsilon_3$, nonnegative integer $q$ and generic vectors $v=(v_1,\ldots,v_m)$, $y=(y_1,\ldots,y_m)$ with $v_i\in\R^{n_i}$ and $y_i\in\R^{r_i}$:
        	% Assume that $\mu_g^i>0$ for all $i=1,\ldots,m$, and $\epsilon_1,\ldots,\epsilon_3$ are positive scalars. Then 
        	\begin{enumerate}
        		\item \label{lem:delayineq-1:RandAsyn} 
        		\(\sum_{i=1}^{m}\langle \nabla_{i}f(x^{k}[i])-\nabla_{i}f(x^{k}), x_i^\star-v_{i}\rangle \) \noindent
        	\(\displaystyle
        	\!\leq \!\tfrac{\epsilon_1}{2}\|v-x^\star\|^{2}_{M_g}\!+\tfrac{B}{2\epsilon_1}\|\bar{\beta}\|^2_{M_g^{-1}}\sw{1}{1}{x}
        \)	 
        	\item \label{lem:delayineq-3:RandAsyn} \(
        	\sum_{i=1}^m\langle L_{\mydot i}^{\top}\left(u^{k}[i]-u^{k+q}\right),x_{i}^{\star}-v_i\rangle \) 
        	\( \!\leq{}\! 
        	\tfrac{\epsilon_2}{2}\|v-x^{\star}\|^{2}_{M_g}\!+\tfrac{C_{\textrm s}(B+q)}{2\epsilon_2}\sw{1}{1-q}{u}
        	\)
        	\item \label{lem:delayineq-4:RandAsyn} \(
        	\sum_{i=1}^m\langle L_{i\mydot}(x^{k}[i]-x^{k+q}),y_i-u_{i}^{\star}\rangle\)  
        	\( \displaystyle \!\leq{}\! 
        	\tfrac{\epsilon_3}{2}\|y-u^{\star}\|^{2}_{M_h}\!+\tfrac{R_{\textrm s}(B+q)}{2\epsilon_3}\sw{1}{1-q}{x}
        	%\sum_{\mathclap{\tau=[k+1-B]_{+}}}^{k}\|x^{\tau+1}-x^{\tau}\|^{2}.
        	\)
        	\end{enumerate}  
        	 where $R_{\rm s}$, $C_{\rm s}$ are defined in \eqref{eq:RsCs:RandAsyn}, $M_g,M_h$ in \eqref{eq:conv-diag:RandAsyn} and \eqref{eq:M:RandAsyn}. 
        \end{lem}
        \begin{proof}
        	We provide the proof for the first inequality and omit the rest noting that they are derived following a similar argument. Using the Cauchy–Schwarz inequality we have 
        	\begin{align*}
        		\sum_{i=1}^{m}\langle x^\star_i-v_{i},\nabla_{i}f(x^{k}[i])-\nabla_{i}f(x^{k})\rangle  {}& {}\leq \sum_{i=1}^{m}\|v_{i}-x^\star_i\|\|\nabla_{i}f(x^{k})-\nabla_{i}f(x^{k}[i])\|\nonumber\\
         &{} \overset{\eqref{eq:Lipz:RandAsyn}}{\leq}{}\sum_{i=1}^{m}\bar{\beta}_i\|v_{i}-x^\star_i\|\|x^{k}-x^{k}[i]\|\nonumber\\
         & {}\overset{\eqref{eq:lem:delays-1:RandAsyn}}{\leq}{}\sum_{i=1}^{m}\bar{\beta}_i\|v_{i}-x^\star_i\|\Big(\sum_{\tau=[k-B]_{+}}^{k-1}\|x^{\tau+1}-x^{\tau}\|\Big)\nonumber\\
         &{} ={}\sum_{i=1\vphantom{B)}}^{m}\sum_{\tau=[k-B]_{+}}^{k-1}\bar{\beta}_i\|v_{i}-x^\star_i\|\|x^{\tau+1}-x^{\tau}\|\nonumber\\
         \dueto{\eqref{eq:simpleYonug:RandAsyn} with $\varepsilon=\tfrac{\mu_g^i\epsilon_1}{B}$}
         & \leq\tfrac{1}{2}\sum_{i=1\vphantom{B)}}^{m}\sum_{\tau=[k-B]_{+}}^{k-1}\!\!\!\!\!\!{\Big(\tfrac{\mu_g^i\epsilon_1}{B}\|v_{i}\!\!\!-x^\star_i\|^{2}\!\!+\tfrac{\bar{\beta}_i^2B}{\mu_g^i\epsilon_1}\|x^{\tau+1}\!\!-x^{\tau}\|^{2}\Big)}\nonumber\\
         & \leq\tfrac{\epsilon_1}{2}\|v-x^\star\|^{2}_{M_g}+\tfrac{B}{2\epsilon_1}\|\bar{\beta}\|^2_{M_g^{-1}}\sum_{\mathclap{\tau=[k-B]_{+}}}^{k-1}\|x^{\tau+1}-x^{\tau}\|^{2},
        \end{align*}
        proving the claim. 
        \end{proof}
	\fi

\end{document}